\documentclass{amsart}

\usepackage{amsmath, amssymb, graphicx, epsfig, verbatim}
\usepackage{epic}
\usepackage{xcolor}

\usepackage{tikz}
\usetikzlibrary{arrows.meta}

\newtheorem{thm}{Theorem}[section]

\newtheorem{cor}[thm]{Corollary}
\newtheorem{lem}[thm]{Lemma}
\newtheorem{prop}[thm]{Proposition}

\newtheorem{defn}[thm]{Definition}

\numberwithin{equation}{section}

\newcommand{\Z}{\mathbb Z}

\newcommand{\R}{\mathbb R}

\newcommand{\cpkk}{{\overline {{\mathbb C}{\mathbb P}}^2}}
\newcommand{\cpk}{{\mathbb {CP}}^2}

\DeclareMathOperator{\PD}{PD}

\begin{document}

\title{Irreducible exotic $\cpk \# 7\cpkk$'s with free involutions}

\author{R. \.{I}nan\c{c} Baykur}
\address{Department of Mathematics and Statistics\\
University of Massachusetts\\
Amherst, MA 01003, USA}
\email{inanc.baykur@umass.edu}

\author{Andr\'{a}s I. Stipsicz}
\address{R\'enyi Institute of Mathematics\\
H-1053 Budapest\\ 
Re\'altanoda utca 13--15, Hungary}
\email{stipsicz.andras@renyi.hu}

\author{Zolt\'an Szab\'o}
\address{Department of Mathematics\\
Princeton University\\
 Princeton, NJ, 08544}
\email{szabo@math.princeton.edu}

\begin{abstract}
We construct infinitely many pairwise non-diffeomorphic irreducible smooth four-manifolds homeomorphic to
$\cpk \#7 \cpkk$, each admitting a free,
orientation-preserving involution. Their quotients give infinitely many
irreducible definite four-manifolds with fundamental group
$\Z/2 \Z$ and $b_2=3$, filling the corresponding gap in the
geography of such examples. The construction of
  infinitely many distinct examples relies on a novel
  equivariant torus surgery.
\end{abstract}
\maketitle

\section{Introduction}
\label{sec:intro}

The geography of smooth four-manifolds with finite fundamental group
has recently seen several new developments, especially in the definite
case; see for example
\cite{LLP,definite,definite2,finitecyclic,HNP,ArabadjiMorgan,ArabadjiMorgan2}.
A recurring strategy in the $\Z/2\Z$ case is to work equivariantly: one first
constructs exotic smooth structures on simply connected four-manifolds
equipped with compatible free, orientation-preserving involutions, and
then passes to the quotients.
For $X_n:=\cpk\#(2n+1)\cpkk$, 
quotients of free involutions on irreducible exotic smooth structures on
$X_n$ give irreducible definite four-manifolds with fundamental group
$\Z/2\Z$ and $b_2=n$.  In \cite{LLP}, the first closed exotic definite
four-manifold with non-trivial fundamental group was constructed.
Subsequently, infinite families of irreducible exotic smooth structures
on $X_n$ with free involutions were constructed in \cite{definite2} for
$n=1,2,4$.

Recall that a smooth four-manifold $X$ is \emph{irreducible} if for a
connected sum decomposition $X=X_1\# X_2$, either $X_1$ or $X_2$ is
homeomorphic to $S^4$.  For $n>4$, no irreducible smooth structure on
$X_n$ is currently known; indeed, no irreducible smooth four-manifold
with nontrivial Seiberg-Witten invariants and
$c_1^2=3\sigma+2\chi<0$ is known, and $c_1^2(X_n)=8-2n$.
Thus, in the geography of odd definite four-manifolds with
fundamental group $\Z/2\Z$ and positive second Betti number at most $4$,
the only case not covered by the constructions in \cite{definite2,ArabadjiMorgan} is
$b_2=3$.

\begin{thm}\label{thm:main}
There are infinitely many pairwise non-diffeomorphic irreducible smooth
four-manifolds $X(k)$, $k\in \Z^+$, homeomorphic to $\cpk\#7\cpkk$,
each admitting a smooth, orientation-preserving, free involution $\tau(k)$.
The family contains exactly one member that admits a symplectic structure, and
each $X(k)$ has only one pair of Seiberg-Witten basic classes.
\end{thm}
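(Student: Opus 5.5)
The plan is to build a symplectic, minimal, simply connected 4-manifold $X(1)$ homeomorphic to $\cpk\#7\cpkk$, carrying a free orientation-preserving symplectic involution $\tau(1)$. We then produce the others by an equivariant torus surgery.

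\textbf{Step 1: the symplectic member.} I would construct $X(1)$ as a $\Z/2\Z$-equivariant generalized fiber sum or Luttinger surgery along tori. Concretely, start from a product like $\Sigma_2\times T^2$, or a suitable Lefschetz fibration. The ambient manifold should have $e=11$ and $\sigma=-6$ and carry a free involution. Since $\chi$ and $\sigma$ are multiplicative under double covers, the quotient must have $e=11/2$, which is impossible. Hence the quotient cannot be built as a closed smooth orbifold directly. Instead I would build it from equivariant pieces: pairs of Lagrangian tori swapped by $\tau$ (or tori preserved by $\tau$ acting freely by a translation), with Luttinger surgeries performed symmetrically. Simple connectivity comes from the surgery curves killing the generators of $\pi_1$. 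The homeomorphism type follows from Freedman: $X(1)$ is odd, $b_2^+=1$, $b_2^-=7$. Minimality and symplecticity give, by Taubes and Liu, that $\pm K$ are basic classes. Because $K^2=2>0$ and the manifold is symplectic with $b^+=1$, the small-perturbation chamber is well defined. I would show that $\pm K$ is the only pair by a wall-crossing and adjunction argument, or by computing via the surgery formula that all other classes vanish. Irreducibility then follows from the standard fact that minimal symplectic manifolds are irreducible (Hamilton–Kotschick).

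\textbf{Step 2: the equivariant torus surgery.} Inside $X(1)$ I would find a $\tau$-invariant nullhomologous torus $T$ with $\tau|_T$ a free translation, or a pair of disjoint tori swapped by $\tau$. The torus should have a framing and a surgery curve $\gamma$ preserved up to sign. Then I perform $1/k$-surgery on $T$ (or simultaneously on both tori) so that the involution extends over the reglued neighborhood. This extension is the novel equivariant step. When $\tau$ translates $T\times D^2$ by a half-period in the $T$ direction, the gluing map commuting with the translation extends. The resulting $X(k)$ stays simply connected, because the meridian is killed by the same relations, and keeps the intersection form. So $X(k)\approx\cpk\#7\cpkk$.

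\textbf{Step 3: distinguishing and irreducibility.} By the Morgan–Mrowka–Szabó surgery formula, the SW invariant of $X(k)$ is $SW_{X(1)}+(k-1)SW_{X_0}$, where $X_0$ is the $0$-surgery. If $T$ is chosen so that $X_0$ has $SW_{X_0}=\pm$ (a single class, e.g. $K$ again), then $X(k)$ has the single basic-class pair $\pm K$ with coefficient growing linearly in $k$. This coefficient distinguishes the $X(k)$ up to diffeomorphism. Irreducibility for $k\ge 2$ comes from nonvanishing SW together with $K^2>0$: a splitting $X_1\# X_2$ with $b^+=1$ forces a summand negative definite, and by Donaldson that summand has diagonal form. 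Blowup-type classes $K\pm E$ would then be basic classes, contradicting uniqueness of the pair. A homotopy-sphere summand is excluded in the standard way, giving irreducibility. A symplectic form on $X(k)$ with $k\ge2$ would force the SW value $\pm1$ on the canonical class by Taubes, contradicting the coefficient $k$. Hence exactly one member is symplectic.

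\textbf{Main obstacle.} I expect the hardest part to be Step 2. We need a torus configuration that is simultaneously equivariant, allows the involution to extend over the surgered region, keeps $\pi_1$ trivial, and yields a nontrivial $SW_{X_0}$ term. I would first try tori arising from Luttinger surgeries already done in Step 1, using one from an equivariant pair.
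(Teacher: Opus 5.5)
\textbf{Verdict: genuine gap.} Your outline has the same architecture as the paper: a symplectic member, an invariant torus on which $\tau$ acts by translation so that it extends over any filling, the Morgan--Mrowka--Szab\'o formula $SW_{X(k)}=SW_{X(1)}+(k-1)SW_{X_0}$, Donaldson plus the blow-up formula for irreducibility, and Taubes to rule out symplectic structures for $k>1$. But it leaves out the constructions that carry the proof, and Step 1 contains an error.

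\textbf{Step 1.} First, $\chi(\cpk\#7\cpkk)=10$, not $11$; your own $b_2^+=1$, $b_2^-=7$ gives $10$. If $\chi$ were odd, no free involution could exist at all, so your conclusion about the quotient is backwards. The quotient $X(k)/\tau(k)$ is a closed smooth manifold with $\chi=5$ and $\sigma=-3$.

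Second, Step 1 never produces a manifold with a free involution and $(\chi,\sigma)=(10,-6)$, and the concrete start you suggest fails. $\Sigma_2\times T^2$ has $\chi=\sigma=0$, and Luttinger surgeries (indeed any torus surgeries) preserve both, so symmetric Luttinger surgeries on it cannot reach the right homeomorphism type. A generalized fiber sum could change $(\chi,\sigma)$, but you specify neither the pieces, the gluing, nor where the involution comes from.

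The missing idea is the paper's doubling trick. Fiber sum two copies of the genus-2 Lefschetz fibration $M\cong T^2\times S^2\#3\cpkk$ along a regular fiber, using $\Phi=a\times(t_{b_1}\circ\phi)$. This map is itself a fixed-point-free, orientation-reversing involution of $S^1\times\Sigma_2$. Interchanging the two summands is then a free, orientation-preserving involution of $X'=M\#_\Phi M$. Van Kampen with the vanishing-cycle relations gives $\pi_1(X')\cong\Z$, and $b_2^+(X')=2$.

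\textbf{Steps 2 and 3.} These are conditional (``find a torus'', ``if $T$ is chosen so that $X_0$ has a single pair''), and those conditions are the substance of the argument.

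In the paper the torus is $T=a_2\times\mu_F$ in the fiber-sum neck of $X'$. It is setwise invariant, and the induced involution on $\partial\nu T$ is $a\times a\times a$. By Lemma~\ref{lem:dual-sphere} it has an embedded $(-2)$-sphere dual, built from the matching vanishing cycles $D$ and $D'=t_\sigma^{-1}(D)$. Hence $m_T$ is trivial in the complement, $\pi_1(X'\setminus\nu T)\cong\Z$ is generated by $a_2$, and the slope $a_2+km_T$ kills it. This is what actually justifies your claim that the surgered manifolds stay simply connected.

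The core torus of the $k=1$ surgery is exactly your nullhomologous invariant torus in $X(1)$, and its $0$-surgery is $X'$. So your $X_0$ is $X'$. The key Seiberg--Witten input is Proposition~\ref{prop:basicXprime}: $X'$ has only the basic classes $\pm c_1(X')$. The proof uses Taubes for simple type and existence, then adjunction on $T$, $T+S$, $F$, $R$, $T_i$, $F-T_i$, together with $K^2=2$.

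Without an explicit invariant torus and this computation, nothing in your proposal guarantees a single pair of basic classes for $X(k)$. Nor does it guarantee the linear growth $SW_{X(k)}(\pm K_k)=\pm k$ that distinguishes the manifolds and rules out symplectic structures for $k>1$.
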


\begin{cor}\label{cor:main}
  There are infinitely many pairwise non-diffeomorphic, irreducible,
  closed, smooth four-manifolds with definite intersection form,
  fundamental group $\Z/2\Z$, and $b_2=3$.
\end{cor}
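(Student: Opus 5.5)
The corollary follows from Theorem~\ref{thm:main} by passing to quotients. For each $k\in\Z^+$ set $Z(k):=X(k)/\tau(k)$. Since $\tau(k)$ is a smooth, free, orientation-preserving involution, $Z(k)$ is a closed, oriented, smooth four-manifold, and $X(k)\to Z(k)$ is a double cover. As $X(k)$ is homeomorphic to $\cpk\#7\cpkk$, it is simply connected, so it is the universal cover and $\pi_1(Z(k))\cong\Z/2\Z$.

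Next I compute the characteristic numbers. Euler characteristic and signature are multiplicative under finite covers (Hirzebruch). With $\chi(X(k))=10$ and $\sigma(X(k))=-6$ this gives $\chi(Z(k))=5$ and $\sigma(Z(k))=-3$. Because $\pi_1$ is finite, $b_1(Z(k))=b_3(Z(k))=0$, so $b_2(Z(k))=5-2=3$. Then $\sigma=-3$ forces $b_2^+=0$ and $b_2^-=3$, so the intersection form is negative definite. The form on $H_2/\mathrm{Tors}$ is therefore definite of rank $3$.

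Irreducibility and the distinction between the $Z(k)$ both lift to the cover.
\begin{itemize}
\item \emph{Irreducibility.} Suppose $Z(k)=Z_1\# Z_2$. Van Kampen gives $\pi_1(Z_1)*\pi_1(Z_2)\cong\Z/2\Z$, so one summand, say $Z_2$, is simply connected. The double cover then decomposes as $X(k)=\widetilde{Z_1}\# Z_2\# Z_2$. By irreducibility of $X(k)$, either $\widetilde{Z_1}$ or $Z_2\#Z_2$ is a homotopy $S^4$.
\begin{itemize}
\item If $Z_2\#Z_2$ is a homotopy sphere, then $Z_2$ has $b_2=0$ and is simply connected, so it is a homotopy sphere, hence homeomorphic to $S^4$ by Freedman.
\item If $\widetilde{Z_1}$ is a homotopy sphere, then $b_2(Z_2\#Z_2)=8$ and $Z_2$ is simply connected with $b_2(Z_2)=4$ and $\sigma(Z_2)=-3$. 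Then $X(k)\cong S^4\#Z_2\#Z_2$ topologically, which is reducible with both $Z_2$ summands nontrivial. This contradicts irreducibility of $X(k)$.
\end{itemize}
Hence $Z(k)$ is irreducible.
\item \emph{Distinctness.} A diffeomorphism $Z(k)\cong Z(l)$ lifts to a diffeomorphism of universal covers $X(k)\cong X(l)$. By Theorem~\ref{thm:main} this forces $k=l$, so the $Z(k)$ are pairwise non-diffeomorphic.
\end{itemize}

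The main care is needed in the irreducibility step, specifically in the lifting of the connected sum to the double cover. Everything else is a direct formal consequence of Theorem~\ref{thm:main}.
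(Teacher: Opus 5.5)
Your proposal is correct and follows the same route as the paper. You pass to $Z(k)=X(k)/\tau(k)$, use multiplicativity of $\chi$ and $\sigma$ to get a negative definite form with $b_2=3$, and lift connected-sum decompositions and diffeomorphisms to the universal double cover $X(k)$. Your case analysis for irreducibility just spells out the paper's one-line argument. In your second case the contradiction should come from the smooth splitting $X(k)=(\widetilde{Z_1}\# Z_2)\# Z_2$, in which neither summand is homeomorphic to $S^4$ because both have $b_2=4$; a merely topological decomposition would not contradict smooth irreducibility.
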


\begin{proof}
  Let $Z(k)=X(k)/\tau(k)$.
  Since $X(k)$ is simply connected and $\tau(k)$ is free, $X(k)$ is the
  universal double cover of $Z(k)$; hence $\pi_1(Z(k))\cong \Z/2\Z$.
  The Euler characteristic and signature are multiplicative under finite
  covers, so we have $\chi(Z(k))=5$ and $\sigma(Z(k))=-3$.
  Since $\pi_1(Z(k))$ is finite, $b_1(Z(k))=0$, and thus
  $b_2(Z(k))=3$.  It follows that the intersection
  form of $Z(k)$ is negative definite.

  We next prove irreducibility.  Suppose that $Z(k)=Z_1\# Z_2$.  Since
$\pi_1(Z(k))\cong \pi_1(Z_1)*\pi_1(Z_2)\cong \Z/2\Z$, one summand is
simply connected after relabeling. Pulling back the connected-sum sphere
to the universal double cover gives a connected-sum decomposition of
$X(k)$ containing two copies of this simply connected summand. Since
$X(k)$ is irreducible, this summand is homeomorphic to $S^4$. Thus the
original decomposition of $Z(k)$ was trivial.

  Finally, if $Z(k)$ and $Z(k')$ are diffeomorphic, then any such
  diffeomorphism lifts to a diffeomorphism of their universal covers.
  Hence $X(k)$ is diffeomorphic to $X(k')$, and Theorem~\ref{thm:main}
  gives $k=k'$.
\end{proof}

In Section~\ref{sec:elsoeset} we give a direct construction of
the first irreducible example, using Gompf's symplectic normal connected sum
with a compatible free involution; its irreducibility is then verified using
Seiberg-Witten theory.  In Sections~\ref{sec:infiniteexamples} and
\ref{sec:SWcomputation} we construct the infinite family, where no other
member is symplectic.  Thus the initial symplectic example is constructed
twice: first by a direct fiber-sum argument, and then again in a form suited
for equivariant torus surgery.  Starting from the genus-2 Lefschetz fibration of \cite{BKsmall}, we use the fibration picture to locate a
torus in the fiber-sum region and to control its framing and complement. This
torus is preserved setwise by the free involution, and equivariant torus
surgery along it produces the manifolds $X(k)$.

{\bf Acknowledgements.}  The first author was partially supported by
NSF grants DMS-2005327 and DMS-2506431, and by a Simons Foundation
Travel Grant.  AS was partially supported by the NKFIH Grant K146401
and by ERC Advanced Grant KnotSurf4d, and ZSz was partially supported
by the Simons Grant \emph{New structures in low dimensional topology}.
The authors thank the organizers of the 2023 \textit{Exotic
  4-manifolds} workshop at Stanford University, supported by the
Simons Collaboration Grant \emph{New structures in low dimensional
topology}, for the stimulating event at which this collaboration
began.  In the course of the prparation of this paper,
LLM was used to assist with proofreading and light language
editing.  The authors take full intellectual responsibility for the
content of this paper.


\section{Simple construction of an exotic $\cpk \# 7\cpkk$ with involution}
\label{sec:elsoeset}

The construction providing the first example of the manifolds claimed
by Theorem~\ref{thm:main} relies on a simple idea.  Consider the
four-manifold $T^2\times S^2$ (the product of the two-dimensional
torus $T^2$ and the 2-sphere $S^2$), and equip it with the projection
$pr_2\colon T^2\times S^2\to S^2$ to the second factor. For some
choice of $s\in S^2$ and $x_0\in T^2$, the submanifolds $T^2\times \{
s\}$ and $\{ x_0 \}\times S^2$ are a fiber and a section of
$pr_2$. Let $F$ denote the homology class of $T^2\times \{ s\}$.

As it is explained in detail in \cite[Section~4.2]{definite2} (the
idea originating from \cite{AP}), the homology class $2F\in
H_2(T^2\times S^2; \Z )$ of twice the fiber can be represented by a
torus, which we get by braiding along one of its $S^1$-factors.
Indeed, consider $S^1\times D \subset S^1\times S^2$, where $D\subset
S^2$ is a small disk with center $c\in S^2$. A 2-braid $B$ is an
embedding $i\colon S^1 \to  S^1\times D$ with the property that the
projection of $ S^1\times S^1\times D $ to its  $S^1$-factor in the middle,
restricted to $i(S^1)$ is a double cover.  Now the product of a
2-braid $B\subset  S^1\times D$ with $S^1$ gives the torus $S^1\times
B\subset T^2\times S^2$ with the property that its homology class is
twice the fiber.

Suppose therefore that we take such a braided surface $t$, a fiber $f$
of $pr_2$ disjoint from it and a section $\sigma$ of $pr_2$
intersecting $f$ in a single point and $t$ in two points. Smooth
$f\cap \sigma$ and one of the intersections of $t$ and $\sigma$, and
blow up the other point in $t\cap \sigma$. Blow up the resulting
genus-2 surface in two further points. In this way we get a genus-2
surface $G$ smoothly embedded in  $T^2\times S^2 \#
3\cpkk$ with trivial normal bundle.
In order to take the normal connected sum of two copies of the resulting
four-manifold along this genus-2 surface, we need to trivialize its normal
bundle, that is, fix a framing on it. In the computation of the fundamental
group of the result the choice of this framing generally plays an important
role --- the situation under consideration is, however, much simpler, and the
actual choice of the framing is not important. Therefore, fix any framing on
the surface.

With these data at hand, we now glue two copies of $T^2\times S^2 \#
3\cpkk \setminus \nu (G)$ together along their boundary through the
orientation reversing diffeomorphism $\varphi$ we describe presently.

Fix the set $s_1,t_1,s_2,t_2$ of loops on the genus-2 surface
$\Sigma _2$ generating $\pi
_1(\Sigma _2)$, as instructed by Figure~\ref{fig:maps}.  Consider the
map $\psi$ on $G=\Sigma _2$ with $\psi\circ\psi ={\rm {id}}_G$ as the
composition of two maps: The first one is the orientation reversing
involution $r$ which swaps the two torus components (when $\Sigma _2$
is viewed as the connected sum of two tori, one of them containing the
loops $s_1,t_1$, the other one the loops $s_2,t_2$).  This map has
fixed points given by the circle $\mu$ and it maps the curve $s_1$ to
$t_2$ and $t_1$ to $s_2$.  The second map is the hyperelliptic
involution $h$ (that is, 180$^\circ$ rotation around the horizontal axis),
as shown by
Figure~\ref{fig:maps}. The composition is clearly fixed point free and
orientation reversing.

\begin{figure}[htb]
\begin{center}
\setlength{\unitlength}{1mm}
 \includegraphics[height=6cm]{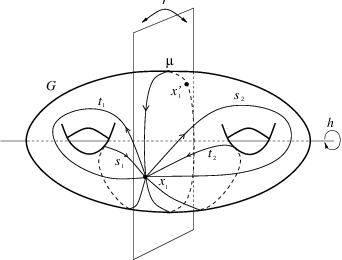}
\end{center}
\caption{\quad The map $r$ is reflection to the plane depicted (hence
  reverses the orientation of the genus-2 surface $G$), while
  $h$ is 180$^\circ$ rotation around the axis shown. Their composition is
a fixed point free, orientation reversing involution $\psi$ on $G$.}
\label{fig:maps}
\end{figure}

To define the map $\varphi$ we use the identification of the boundaries
of the two copies of $T^2\times S^2\# 3\cpkk \setminus \nu (G)$
with $G\times S^1$ using the framing (described earlier),
where the curves
$s_1,t_1,s_2,t_2$ lie on a copy of $G\times \{p\}$ (for some
point $p\in S^1$).
Then $\varphi$ is given by
\begin{equation}
  \label{eq:Psi}
\varphi(g,\theta) = (\psi(g), \theta)
\end{equation}
for $g \in G$, $\theta \in S^1$.

In determining the action of the involution $\psi$ on
$\pi _1 (G, x_1)$, we need to set up a further convention.
Suppose that the two points $x_1$ and $x_1'$ are opposite points of
the circle $\mu$ we get by intersecting $G$ with the plane to which $r$
reflects. (Note that $\mu$ can be viewed
as a loop based either at $x_1$ or at $x_1'$.) Then
the map $\psi$ induces a map $\psi _*\colon \pi _1(G, x_1)\to \pi _1(G,x_1')$.
Connecting $x_1'$ to $x_1$ by one of the semi-circles of $\mu$
(dictated by the orientation of $\mu$ given in Figure~\ref{fig:maps}), we can identify
$\pi _1(G, x_1')$ with $\pi _1(G, x_1)$ in such a way that when composing
$\psi _*$ with this identification, we get that
\begin{equation}\label{eq:action}
  s_1\mapsto t_2^{-1}\mu , \qquad t_1 \mapsto \mu ^{-1} s_2^{-1},
  \qquad s_2\mapsto \mu ^{-1}t_1^{-1},
  \qquad t_2\mapsto s_1^{-1}\mu.
  \end{equation}

In our later arguments we will need to know which elements of the
fundamental group of the boundary $G\times S^1$ map to zero in the complement
$T^2\times S^2\# 3\cpkk \setminus \nu (G)$. The exceptional spheres
of the last two blow-ups meet $G$ once, so a normal meridian of $G$ is trivial
in the complement. Consequently, the inclusion-induced map identifies
the fundamental group of the complement with the fundamental group of
$T^2\times S^2\# 3\cpkk$, an abelian group isomorphic to
$\pi _1 (T^2\times S^2)\cong \Z ^2$.
Thus we can work with homologies rather than homotopies.
The generators of the fundamental group of
$T^2\times S^2\# 3\cpkk \setminus \nu (G)$ 
can be chosen to be the two main circles
of $T^2$, which we call $a$ and $b$. Under the above identification,
the inclusion-induced map is given by
\[
s_1\mapsto a, \qquad t_1,t_2\mapsto b, \qquad  s_2\mapsto a^2.
\]
The loop $\mu$ appearing in Equation~\eqref{eq:action} maps trivially: it is represented on $G$ by the commutator $[s_1,t_1]$, and the
complement group is abelian.

Consider now the four-manifold
\begin{equation}\label{eq:defi}
X=(T^2\times S^2 \# 3\cpkk \setminus \nu (G)) \cup _{\varphi} (T^2\times S^2 \#
3\cpkk \setminus \nu (G)) .
\end{equation}


\begin{lem}\label{lem:char}
  The characteristic numbers (the Euler characteristic $\chi$ and
  the signature $\sigma$) of $X$ are given as
  \[
  \chi (X)= 10, \qquad \sigma (X)= -6.
  \]
\end{lem}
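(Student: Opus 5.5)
We compute $\chi$ and $\sigma$ of the building block $M=T^2\times S^2\#3\cpkk$ and then apply additivity under gluing along the closed 3-manifold $G\times S^1$.

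First, $\chi(T^2\times S^2)=0\cdot 2=0$ and $\sigma(T^2\times S^2)=0$, since the intersection form on $H_2\cong\Z^2$ is hyperbolic. Each blow-up adds $1$ to $\chi$ and $-1$ to $\sigma$, so $\chi(M)=3$ and $\sigma(M)=-3$.

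For the Euler characteristic, $M=(M\setminus\nu(G))\cup \nu(G)$ with $\nu(G)\cong G\times D^2$ and intersection $G\times S^1$. Since $\chi(G\times S^1)=0$, we get
\[
\chi(M\setminus \nu(G))=\chi(M)-\chi(G)=3-(-2)=5.
\]
The manifold $X$ is the union of two copies of $M\setminus\nu(G)$ along $G\times S^1$, which again has $\chi=0$. Hence $\chi(X)=5+5=10$.

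For the signature, we use Novikov additivity, which holds for gluing along entire boundary components. Thus $\sigma(M)=\sigma(M\setminus\nu(G))+\sigma(G\times D^2)$. Since $G$ has trivial normal bundle, $G\times D^2$ has zero intersection form on $H_2$ (generated by $[G]$ with $[G]^2=0$), so $\sigma(G\times D^2)=0$ and $\sigma(M\setminus\nu(G))=-3$.

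The gluing map $\varphi$ is orientation reversing as a map between the boundaries, each oriented as the boundary of its own piece. It therefore produces an oriented closed manifold in which both pieces keep their orientation. This holds because $\psi$ reverses the orientation of $G$ and is the identity on the $S^1$ factor. Novikov additivity then gives $\sigma(X)=-3+(-3)=-6$.

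The only point needing care is this orientation check. We must confirm that both copies enter $X$ with their original orientation, rather than one being reversed, which would give $\sigma=0$. This follows from the fact that $\varphi$ is orientation reversing on $G\times S^1$, so no Wall non-additivity correction arises.
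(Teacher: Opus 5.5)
Your proof is correct and follows the same route as the paper, which likewise computes $\chi=5$ and $\sigma=-3$ for $T^2\times S^2\#3\cpkk\setminus\nu(G)$ and concludes by additivity; your explicit orientation check on $\varphi(g,\theta)=(\psi(g),\theta)$ fills in a step the paper leaves implicit. One small quibble: the closing remark about ``no Wall non-additivity correction'' is beside the point, since Novikov additivity applies simply because the gluing is along entire closed boundary components, whereas the orientation check is what guarantees that both pieces contribute $-3$ rather than one contributing $+3$.
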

\begin{proof}
  As $\chi (T^2\times S^2)=0$, it follows that
  $\chi( T^2\times S^2 \# 3\cpkk \setminus \nu (G))=5$, hence by additivity of
  $\chi$ the claim follows. In a similar manner,
  $\sigma (T^2\times S^2)=0$ and so 
  $\sigma( T^2\times S^2 \# 3\cpkk \setminus \nu (G))=-3$, so by additivity again
  we get that $\sigma (X)=-6$.
  \end{proof}

\begin{prop}\label{prop:sc}
  The four-manifold $X$ is simply connected.
\end{prop}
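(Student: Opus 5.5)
We will compute $\pi_1(X)$ with the Seifert--van Kampen theorem applied to the decomposition~\eqref{eq:defi}. Write $M=T^2\times S^2\#3\cpkk\setminus\nu(G)$ for each piece. As noted above, the meridian of $G$ dies in $M$, and $\pi_1(M)\cong\Z^2=\langle a,b\rangle$ is abelian. Denote the generators of the two copies by $a,b$ and $a',b'$. Since $\varphi$ fixes the $S^1$-factor, the meridian $\{pt\}\times S^1$ is matched with a meridian, and it is trivial on both sides. So $\pi_1(X)$ is the quotient of $\langle a,b\rangle * \langle a',b'\rangle$, with each factor abelian, by the relations $i_1(\gamma)=i_2(\psi_*\gamma)$ for $\gamma\in\{s_1,t_1,s_2,t_2\}$. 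The base point conventions are handled by the identification in~\eqref{eq:action}, via the arc along $\mu$.

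The key steps, in order, are the following.

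First, I will record the images under $i_1$: $s_1\mapsto a$, $t_1\mapsto b$, $s_2\mapsto a^2$, $t_2\mapsto b$.

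Second, I will compute the images under $i_2\circ\psi_*$ using~\eqref{eq:action}. Because $\mu=[s_1,t_1]$ maps trivially into the abelian group $\pi_1(M)$, the $\mu$ factors drop out. This gives
\[
s_1\mapsto (b')^{-1},\quad t_1\mapsto (a')^{-2},\quad s_2\mapsto (b')^{-1},\quad t_2\mapsto (a')^{-1}.
\]

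Third, I will write down the resulting relations:
\[
a=b'^{-1},\qquad b=a'^{-2},\qquad a^2=b'^{-1},\qquad b=a'^{-1}.
\]
The first and third relations give $a^2=a$, so $a=1$, and hence $b'=1$. The second and fourth give $a'^{-2}=a'^{-1}$, so $a'=1$, and hence $b=1$. All four generators are therefore trivial, and $\pi_1(X)=1$.

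The main obstacle is justifying that the identification of $\pi_1(\partial)$ through the two inclusions is legitimate despite the base point shift from $x_1$ to $x_1'$ and the choice of framing. I would argue this as follows. Any change of base path or framing alters the images only by conjugation, by multiplication by $\mu$, or by powers of the meridian. Conjugation is invisible because each $\pi_1(M)$ is abelian, and the other two alterations are trivial in $\pi_1(M)$. Hence the relations in the amalgamated product are exactly the abelianized ones above.

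A further subtlety is that the full group is an amalgamated product of two abelian groups rather than an abelian group. The argument above, however, only uses relations that identify individual elements of the two factors. Once both factors are killed, the quotient is trivial regardless of any noncommutativity between them.
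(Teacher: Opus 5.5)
Your proposal is correct and follows the paper's proof essentially verbatim: you apply Seifert--van Kampen with the meridians and $\mu$ trivial in the abelian complements, then read off the relations $a=b'^{-1}=a^2$ and $b=a'^{-1}=a'^{-2}$ from \eqref{eq:action}. Your extra remarks on base-point shifts, framing changes, and the amalgamated product not being abelian a priori are sound, and they make explicit what the paper leaves implicit.
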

\begin{proof}
  Let lower case letters denote elements of the fundamental group in
  one copy of $T^2\times S^2\# 3\cpkk-\nu (G)$, while the
  corresponding upper case letters denote elements in the other copy of the same
  manifold.  By the Seifert-Van Kampen theorem the fundamental group
  $\pi _1 (X)$ is generated by the generators $a,b$ and $A,B$
  generating the fundamental groups of the two copies of $T^2\times
  S^2\# 3\cpkk$, together with the normal meridians of the surfaces
  $G$ and $G'$ in the two copies.  The blow-ups show that these
  normal meridians are trivial in $\pi _1$,
  consequently in the following we will ignore their classes.
 
  The loop $\mu\subset G$ appearing in Equation~\eqref{eq:action}
  also maps trivially in the complements, as noted above.  Therefore the
  identification of $s_1$ with $T_2^{-1}$ by the gluing map implies
  that $a$ is equal to $B^{-1}$. The identification of $s_2$ with $T_1^{-1}$
  identifies $a^2$ with $B^{-1}$ in the same way, implying that $a=1$ and
  so $B=1$. Similarly, using the identifications of $S_1^{-1}$ with $t_2$
  and $S_2^{-1}$ with $t_1$, we get
  $A^{-2}=b=A^{-1}$, implying $A=1$ and $b=1$, concluding the argument.
    \end{proof}

\begin{thm}
  The four-manifold $X$ admits a symplectic structure and it is homeomorphic
  to $\cpk \# 7 \cpkk$. It admits a free, orientation preserving involution
  $\tau$.
\end{thm}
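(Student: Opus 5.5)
We treat the three claims separately: the symplectic structure, the homeomorphism type, and the involution.

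\emph{Symplectic structure.} Take a product symplectic form on $T^2\times S^2$. The fiber $f$, the section $\sigma$ and the braided torus $t$ can all be chosen symplectic: $t=S^1\times B$ is a product of a circle with a braid transverse to the disk direction, so a small perturbation makes it symplectic. All intersections among them are positive and transverse. Smoothing positive transverse intersections of symplectic surfaces gives a symplectic surface. The blow-ups can be done symplectically: blowing up a point of $t\cap\sigma$ separates the two curves, and the further two blow-ups at points of the surface lower its self-intersection. The self-intersection of the smoothed surface $f+t+\sigma$ is $0+0+0+2(1+2)=6$, and the three blow-ups reduce it to $6-2\cdot1\cdot 2-1-1=0$, since the first blow-up point lies on both $t$ and $\sigma$. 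This matches the claim in the text that $G$ is a symplectic genus-2 surface of square zero. Gompf's symplectic normal connected sum then applies to two copies of $(T^2\times S^2\#3\cpkk,G)$. The gluing must identify the normal circle bundles by an orientation-reversing map covering a symplectomorphism between $G$ and $\overline{G}$. The map $\psi$ is an orientation-reversing diffeomorphism of $G$, so $\psi$ viewed as a map $G\to \overline G$ is orientation preserving, and after an isotopy it is a symplectomorphism for equal-area forms (Moser). The main care point is that $\varphi$ in \eqref{eq:Psi}, which is $(g,\theta)\mapsto(\psi(g),\theta)$, is the correct orientation-reversing gluing of circle bundles for Gompf's construction. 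The framing freedom noted in the text absorbs any discrepancy.

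\emph{Homeomorphism type.} By Proposition~\ref{prop:sc}, $X$ is simply connected. By Lemma~\ref{lem:char}, $b_2=\chi-2=8$, and $\sigma=-6$ gives $b_2^+=1$ and $b_2^-=7$. For parity, the exceptional sphere $E$ of the last blow-up meets $G$ once, so it survives in the complement as a disk whose boundary is a fiber of the normal circle bundle. Gluing this disk to its counterpart in the other copy gives a sphere of square $-2$. A better choice is to check directly that some class has odd square: the exceptional class of the first blow-up is disjoint from $G$, so it lies in the complement with square $-1$. Hence the form is odd. Freedman's classification then gives $X\approx\cpk\#7\cpkk$.

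\emph{Involution.} Define $\tau$ to swap the two copies of $M=T^2\times S^2\#3\cpkk\setminus\nu(G)$: set $\tau(x)=x'$ on the first copy and $\tau(x')=x$ on the second. For this to be well defined across the gluing we need $\varphi^{-1}=\varphi$ on $G\times S^1$, which holds because $\psi^2=\mathrm{id}$. Concretely, a boundary point $y$ of copy 1 is identified with $\varphi(y)$ in copy 2, and $\tau$ sends it to $y$ in copy 2, which is identified with $\varphi^{-1}(y)=\varphi(y)$ in copy 1. This is exactly the image of the point $\varphi(y)$ under the swap, so the two descriptions agree. The map $\tau$ has no fixed points off the gluing region because it swaps the copies. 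On the collar $G\times S^1$ it acts as $\varphi$, which is free because $\psi$ is fixed-point free. To make $\tau$ smooth, choose collars symmetric under the swap. It preserves orientation because it is the identity map between the two copies, which carry the same orientation.
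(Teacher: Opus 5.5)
Your overall route is the paper's: Gompf's symplectic normal sum, Freedman's classification, and the side-swapping involution. Your involution paragraph is correct, and in fact more careful than the paper's one-line argument. Two steps, however, do not hold as written.

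\emph{Parity.} The claim that the exceptional sphere of the first blow-up is disjoint from $G$ is false. That blow-up is centered at the remaining double point in $t\cap\sigma$, so the proper transform meets $e_1$ in two points: $[G]\cdot e_1=2$. This is exactly why you subtracted $4$ in your own square computation. The spheres $e_2,e_3$ each meet $G$ once, so no exceptional sphere lies in the complement, and your odd class is not there. The repair is immediate. $X$ is closed and simply connected, so its form is unimodular, and $\sigma(X)=-6\not\equiv 0 \pmod 8$ rules out an even form. Alternatively, the proper transform of a fiber through the second blow-up point has class $f-e_2$; it is a square $-1$ torus disjoint from $G$. With this, Freedman applies exactly as in the paper.

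\emph{The Gompf step.} You rightly observe that $\psi\colon G\to\overline G$ is orientation-preserving. But a symplectomorphism $(G,\omega|_G)\to(G,-\omega|_G)$ is what Gompf's theorem needs only if the second summand carries the form $-\omega$, and you never say this. With $\omega$ on both copies, $\varphi=\psi\times\mathrm{id}$ reverses orientation on the base and preserves it on the normal circles. That is the opposite of Gompf's hypotheses on both counts.

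The missing idea, which the paper states explicitly, is to put $\omega$ on one copy and $-\omega$ on the other. Since $(-\omega)^2=\omega^2$, the orientation of the four-manifold is unchanged. In the second copy, however, the orientations of $G$ and of its normal bundle both flip. Then $\psi$ becomes orientation-preserving on the base and can be chosen symplectic, while $\theta\mapsto\theta$ becomes fiberwise orientation-reversing, as required.

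Your remark that ``the framing freedom absorbs any discrepancy'' cannot do this job. Changing the framing alters $\varphi$ by a map $G\to S^1$ acting by rotations in the fibers, which never changes the orientation behavior of $\varphi$ on the normal circles.
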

\begin{proof}
  The manifold $T^2\times S^2$, and its thrice blow-up both admit
  symplectic structures. The symplectic form can be chosen so that the
  fibers and the sections are symplectic submanifolds, and the toric
  representative $t$ of $2F$ can be chosen to be symplectic as
  well. As the smoothing and the blow-ups preserve this property, $G$
  is a symplectic submanifold of $T^2\times S^2\# 3\cpkk$. The gluing
  map $\varphi$ can be viewed as a symplectic normal connected sum
  map, hence by \cite{Gompf} it follows that $X$ is symplectic.
  Indeed, by fixing the symplectic structure $\omega $ on one copy,
  and $-\omega$ on the other copy of $T^2\times S^2\# 3\cpkk$, the map
  $\psi$ is an orientation preserving map between the two copies of
  $G$ (each oriented by the respective symplectic structures), and can
  be chosen to be a symplectomorphism.
  
  The signature and Euler characteristic of $X$ have been determined in
  Lemma~\ref{lem:char}, giving $\sigma (X)=-6$ and $\chi (X)=10$. As
  $X$ is simply connected by Proposition~\ref{prop:sc}, the
  application of Freedman's theorem \cite{Fr} implies that $X$ is
  homeomorphic to $\cpk \# 7\cpkk$.  The orientation preserving
  involution $\tau$ is given by patching the identification of the two
  manifolds-with-boundary in the definition of
  Equation~\eqref{eq:defi} with the gluing map $\varphi$. Since
  $\psi$, and hence $\varphi$, is fixed point free, the resulting
  involution is free.
\end{proof}

\begin{prop}
  The symplectic four-manifold $X$ is an irreducible, exotic copy of $\cpk \# 7\cpkk$.
\end{prop}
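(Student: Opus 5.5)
We need to show two things: that $X$ is not diffeomorphic to $\cpk\#7\cpkk$, and that $X$ is irreducible. Both will follow from Seiberg--Witten theory combined with the symplectic structure established in the previous theorem.

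\emph{Exoticness.} Since $X$ is symplectic with $b_2^+(X)=1$, Taubes' theorem in the chamber determined by the symplectic form gives $SW_X(\pm K_X)=\pm1$, where $K_X$ is the canonical class. In contrast, $\cpk\#7\cpkk$ carries a metric of positive scalar curvature, so its small-perturbation invariants vanish. For $b_2^+=1$ and $b_1=0$ we must compare chambers carefully. Alternatively, we can use the standard fact that $\cpk\#7\cpkk$ contains a smoothly embedded sphere of self-intersection $+1$ (or $0$). A symplectic, minimal manifold that is not rational or ruled cannot contain such a sphere. This is the Liu / McDuff theorem: a symplectic $4$-manifold containing a smooth sphere of nonnegative square is rational or ruled. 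Since $X$ is simply connected and not diffeomorphic to a rational surface once it is shown to be minimal, it is exotic. So the real content is minimality together with $X$ not being rational.

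\emph{Minimality.} I would apply Usher's theorem: a symplectic normal connected sum $M_1\#_{G}M_2$ is minimal unless one side is an $S^2$-bundle situation or the surface is an exceptional-sphere-adjacent configuration. Precisely, $X_1\#_{\Sigma}X_2$ is minimal provided neither $X_i\setminus\Sigma$ contains an exceptional sphere disjoint from $\Sigma$, and $X_i$ is not a ruled surface with $\Sigma$ a section (for genus $\ge1$). Here $X_i=T^2\times S^2\#3\cpkk$, and $G$ has genus $2$. The three exceptional classes $E_1,E_2,E_3$ all meet $G$: $E_1$ was blown up at a point of $t\cap\sigma$, and $E_2,E_3$ at points of the genus-2 surface. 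Any other exceptional class in this irrational ruled surface blow-up is one of the $E_i$, since the minimal model $T^2\times S^2$ has a unique ruling and its exceptional classes are only the $E_i$. Thus there are no exceptional spheres in the complement. $G$ is not a section, since its genus is $2$ and its class is $2F+\sigma-E_1-E_2-E_3$ up to sign conventions. Hence Usher gives that $X$ is symplectically minimal. Then $K_X^2=2\chi+3\sigma=2$ is positive, and $X$ is simply connected with $b_2^+=1$. Since minimal rational surfaces have $K^2\in\{8,9\}$, $X$ is not rational, and it is not ruled since it is simply connected with $b_2=8$. By Liu's theorem, $K_X$ pairs positively with $[\omega]$, i.e.\ $X$ has nonnegative Kodaira dimension.

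\emph{Irreducibility.} Suppose $X=X_1\#X_2$. Both summands are simply connected. By Freedman and Donaldson-type arguments, both summands must be considered according to their intersection forms. If both had $b_2^+=0$... Instead, I would use the following fact: a minimal symplectic $4$-manifold of nonnegative Kodaira dimension is irreducible. This is due to Hamilton--Kotschick in the $b_2^+=1$ case, via Taubes/Li--Liu SW-type arguments. Concretely, $b_2^+(X)=1$ forces one summand, say $X_2$, to be negative definite. By Donaldson's theorem, its form is diagonal. If $b_2(X_2)>0$, a class $e$ with $e^2=-1$ in $X_2$ would, by the blow-up formula and the wall-crossing analysis for $b_2^+=1$, $b_1=0$, give basic classes $K\pm e$. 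These produce a smooth exceptional sphere obstruction contradicting minimality (Li's theorem that for $\kappa\ge0$ any $-1$ class representable smoothly is symplectic-exceptional). The main obstacle is this step: handling a homotopy $4$-sphere summand and a definite summand that is not a priori a blow-up. I would first try to cite Hamilton--Kotschick directly for irreducibility of minimal symplectic manifolds with $b_2^+=1$ and residually finite $\pi_1$. Since $\pi_1=1$ here, their result applies verbatim and finishes the proof.
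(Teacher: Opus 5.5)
Your final route is the paper's: minimality of $X$ from Usher's theorem on symplectic sums, then irreducibility from Hamilton--Kotschick because $\pi_1(X)=1$. Exoticness needs no separate argument. $\cpk\#7\cpkk$ is visibly reducible, and $X$ is already known to be homeomorphic to it. So you can drop both the abandoned Donaldson/blow-up sketch and the detour through the Li--Liu smooth-sphere theorem. That detour's step ``minimal rational surfaces have $K^2\in\{8,9\}$'' would in any case also need the fact that no rational manifold other than $\cpk$ and $S^2\times S^2$ carries a minimal symplectic form.

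Your verification of Usher's hypotheses reaches the right conclusion, but it contains false statements you should fix.
\begin{itemize}
\item The class of $G$ is $3f+s-2e_1-e_2-e_3$, which has square $0$. The class $2f+s-e_1-e_2-e_3$ you wrote has square $1$.
\item The exceptional classes of $T^2\times S^2\#3\cpkk$ are not only the $e_i$. The proper transform of the sphere $\{x\}\times S^2$ through a blown-up point is a symplectic $(-1)$-sphere in the class $s-e_i$. In fact $e_i$ and $s-e_i$ are exactly the exceptional classes: a sphere class has no $f$-component, since maps $S^2\to T^2$ have degree $0$.
\item The conclusion still holds, because every exceptional class meets $G$: $G\cdot e_1=2$, $G\cdot e_2=G\cdot e_3=1$, $G\cdot(s-e_1)=1$, and $G\cdot(s-e_2)=G\cdot(s-e_3)=2$. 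Hence no exceptional sphere lies in the complement of $G$.
\item Usher's exceptional case (an $S^2$-bundle with the gluing surface a section) is excluded because $T^2\times S^2\#3\cpkk$ is not an $S^2$-bundle at all. It is not excluded by the genus of $G$: a section of an $S^2$-bundle over $\Sigma_2$ also has genus $2$.
\end{itemize}
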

\begin{proof}
  The minimality of the symplectic four-manifold $X$ follows
  from Usher's \cite[Theorem~1.1]{usher}. Since $X$ is simply connected,
  the theorem of Hamilton--Kotschick \cite{HK} implies that $X$ is
  irreducible, verifying the claim.


    Alternatively, we can determine the (small perturbation)
  Seiberg-Witten invariants of $X$. Note that although
  $b_2^+(X)=1$, since $b_2^-(X)=7\leq 9$, the small perturbation
  Seiberg-Witten invariant, that is, the invariant corresponding to the
  metric chamber, is a well-defined smooth invariant.

  Before starting the computation, let us denote the homology
  class of $[T^2\times \{ pt.\}]$ by $f$, and of $[\{ pt. \}\times S^2]$
  by $s$ in $H_2(T^2\times S^2; \Z )$.
  The first Chern class $c_1$ of $T^2\times S^2$ is the Poincar\'e dual of
  twice the homology class $f$; equivalently, it evaluates on $f$ as $0$
  and on $s$ as $2$.
  The first Chern class of
  $T^2\times S^2\# 3\cpkk$ is then $c_1-E_1-E_2-E_3$, where $E_i$ is
  the Poincar\'e dual of the exceptional sphere $e_i$ of the $i^{th}$
  blow-up, and satisfies $E_i(e_j)=-\delta _{ij}$.

  Before proceeding any further, let us recall that a metric on a
  manifold $X$ provides the Hodge identification of $H^2(X; \R) $ with
  the space of harmonic 2-forms.  In case the 4-manifold $X$ has
  $b_2^+=1$, the self-dual harmonic 2-forms provide a line. We can
  intersect this line with those harmonic 2-forms which have square 1,
  a subset in $H^2(X; \R)$ having two components. Therefore the line
  of self-dual 2-forms intersects this subset in two points.
  
  The Seiberg-Witten invariant of $T^2\times S^2\# 3\cpkk$ with a
  metric of period point Poincar\'e dual to
  $f+s+\epsilon _1e_1+\epsilon _2 e_2+\epsilon _3e_3$, where the
  $\epsilon _i$ can be chosen arbitrarily small, is, by the blow-up
  formula, equal to the Seiberg-Witten invariant of $T^2\times S^2$,
  hence it is zero.  Thus in this chamber the Seiberg-Witten invariant
  for the spin$^c$ structure with first Chern class
  $c_1-E_1-E_2-E_3$ is still zero, and the pairing of this class with
  the period point is positive.

  When introducing a metric on $T^2\times S^2\# 3\cpkk$ which pulls
  the surface $G$ far away, the period point gets closer and closer to
  the ray Poincar\'e dual to the homology class of $G$, namely to
  $\pm (3f+s-2e_1-e_2-e_3)$. The correct sign is $+$, as this is the
  one which is in the boundary of the component containing the above
  period point. The pairing of $c_1-E_1-E_2-E_3$ with this class is
  negative. Hence, when passing to the neck-stretching chamber, we cross
  a wall, and by the wall-crossing formula the Seiberg-Witten invariant
  is $\pm 1$ on this spin$^c$ structure.
    
  Finally we appeal to the gluing of these invariants along $\Sigma
  _2\times S^1$ in the top spin$^c$ structure, that is, in the structure
  evaluating on $\Sigma _2$ as $\pm 2$, from \cite{MSzT}, where the
  relevant Floer homology of the three-manifold is 1-dimensional. This
  argument implies that $X$ has a spin$^c$ structure $K$ with non-trivial
  Seiberg-Witten invariant in the metric chamber, showing that it is not
  diffeomorphic to $\cpk \# 7\cpkk$.

  In order to compute all the basic classes of $X$ we will use the
  adjunction inequalities for smoothly embedded genus-2 surfaces with
  square 0. First note that we have homology classes $A,B$ of square 0,
  where $A$ is the homology class of $G$ and $B$ corresponds to gluing
  together two surfaces of the type $T^2\times \{ pt. \}$ from the two
  $T^2\times S^2\#3\cpkk$ sides. $A$ and $B$ are both represented by surfaces
  of genus 2. There is another homology class $C$ given by gluing
  together two copies of $\{ pt. \}\times S^2$ from the two sides. We also have six homology
  classes of square $-1$. Three of them $x_1,x_2,x_3$ come from one of
  the copies $T^2\times S^2\#3\cpkk-G$, and in
  $T^2\times S^2\#3\cpkk$ their Poincar\'e duals represent
  $f-e_1$, $f-e_2$, $2f-e_3$, respectively. The other three classes
  $x_4,x_5,x_6$ are symmetric and come from the other copy. Now
  $A,B,x_1,\ldots,x_6$ give a splitting of $H_2(X;\Z)$ into a hyperbolic
  pair and a negative definite diagonal intersection form.

  Since a basic class $L$ is characteristic, we have
  \[
  L = 2k_1 \cdot \PD(A) + 2k_2 \cdot \PD(B) +
  \sum_{i=1}^6 t_i\PD(x_i),
  \]
  where all $t_i$ are odd. Using the adjunction inequality for $L$ and
  the surfaces representing $A$ and $B$, we get $|k_1|\leq 1$ and
  $|k_2|\leq 1$. Using that the formal dimension of $L$ has to be
  non-negative, we have $|t_i|=1$ for all $i$, and either
  $k_1=k_2=1$ or $k_1=k_2=-1$. By multiplying with $\pm 1$, we can
  assume that $k_1=k_2=1$.

  To finish the argument, using intersection numbers with the elements of  the above
  basis, we get that
  \[
  C=2A+3B-x_1-x_2-2x_3-x_4-x_5-2x_6.
  \]
  Since $C$ can be represented by a smooth genus-2 surface with square
  0, it follows that $|L(C)|\leq 2$. However, plugging in the formula for
  $L$ together with the intersection form computation for $X$, we get
  \[
  L(C)=10-t_1-t_2-2t_3-t_4-t_5-2t_6.
  \]
  This together with $|t_i|=1$ implies that all $t_i=1$. Hence there are
  at most two basic classes, and therefore $\pm K$ are the only basic
  classes of $X$. This gives another verification of the irreducibility
  of $X$.
\end{proof}

\section{Infinitely many examples}
\label{sec:infiniteexamples}
The above construction admits a modification leading to infinitely
many irreducible examples, and eventually to the proof of
Theorem~\ref{thm:main}, which we discuss now.  The new ingredient is an
equivariant torus surgery construction.  We first locate a torus in the
fiber-sum region of an intermediate four-manifold with $\pi_1\cong \Z$;
the torus is preserved setwise by the free involution, and surgery in the
distinguished direction kills the remaining fundamental group.  Equivalently,
after the first surgery, the construction may be viewed as varying the
coefficient of torus surgery on a nullhomologous torus in an exotic
$\cpk\#7\cpkk$ with a compatible free involution.

To set the stage, we use the following positive factorization, which is a
Hurwitz-equivalent presentation of the smallest genus-2 Lefschetz fibration
of Baykur--Korkmaz \cite[Theorem~7]{BKsmall}; see also
\cite[Figure~4]{Huang} for the curves in this notation.  The total space
$M$ is diffeomorphic to $T^2\times S^2\#3\cpkk$ by
\cite[Proposition~9]{BKsmall}; in particular,
$\chi(M)=3$, $\sigma(M)=-3$, and $\pi_1(M)\cong \Z\oplus \Z$.

\begin{figure}[htb]
\begin{center}
\includegraphics[width=\textwidth]{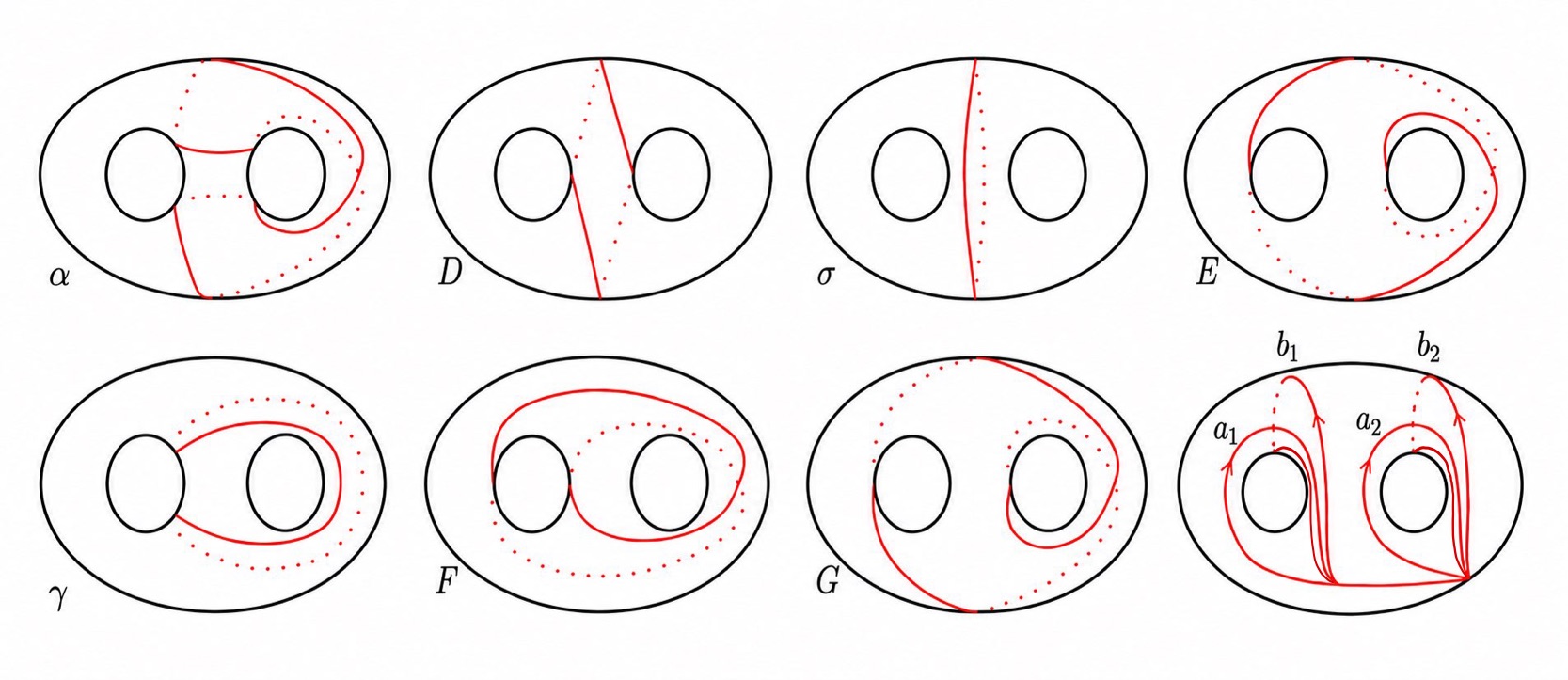}
\end{center}
\caption{The monodromy curves for the genus--$2$ Lefschetz fibration on $M=T^2\times S^2\#3\cpkk$ and the oriented, based curves $a_i, b_i$ generating $\pi_1(\Sigma_2)$.}
\label{fig:curves}
\end{figure}

For the curves $D,E,F,G,\alpha,\sigma,\gamma$ shown in
Figure~\ref{fig:curves}, we have the positive factorization
\begin{equation}\label{eq:relation}
t_{\alpha}t_Dt_{\sigma}t_Et_{\gamma}t_Ft_G=1
\end{equation}
in the mapping class group of the closed genus-2 surface.

We will use the following description of the fundamental group. Since the
fibration has a section, the normal meridian of a regular fiber is trivial
in the fiber complement. Hence the group relevant for the fiber-sum
calculation, which is the quotient
$\pi_1(\Sigma_2)/N$,
where $N$ denotes  the normal subgroup
generated by the vanishing cycles $D,E,F,G,\alpha,\sigma,\gamma$, is the same as $\pi_1(M)\cong \Z^2$. Since it is abelian, the Seifert--Van Kampen
calculations below can be carried out at the level of homology. 

Reading
the nonseparating vanishing cycles in $H_1(\Sigma_2;\Z)$, we get the two
independent relations
$$
        b_1+b_2=0,\qquad a_1+2a_2=0
$$
where $a_i, b_i$ denote the (oriented) curves generating $\pi_1(\Sigma_2)$; see Figure~\ref{fig:curves}.
Thus this abelian group is freely generated by $a_2$ and $b_1$. 

Let $\phi\colon \Sigma_2\to\Sigma_2$ be the composition of the
reflection in the horizontal plane with the hyperelliptic involution.
With respect to the generating curves $a_1,b_1,a_2,b_2$, the induced
map on homology is
$$
        a_1\mapsto a_1,\qquad b_1\mapsto -b_1,\qquad
        a_2\mapsto a_2,\qquad b_2\mapsto -b_2 .
$$
We define the gluing map
$$
        \Phi=a\times (t_{b_1}\circ\phi)
        \colon S^1\times \Sigma_2\to S^1\times \Sigma_2,
$$
where $a$ is the antipodal map on the $S^1$-factor. Here we regard $b_1$ as a free (unbased) loop so that $\phi$ maps $b_1$ back to itself. We use $\Phi$
to take the twisted fiber sum of two copies of the Lefschetz fibration
$M\to S^2$, and denote the resulting four-manifold by $X'$.

\begin{prop}\label{prop:xprime}
  The four-manifold $X'$ has $\pi _1(X')\cong \Z$.
\end{prop}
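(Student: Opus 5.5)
The plan is to compute $\pi_1(X')$ with the Seifert--Van Kampen theorem applied to the decomposition
\[
X' = (M\setminus \nu(\Sigma_2)) \cup_\Phi (M\setminus \nu(\Sigma_2)),
\]
where the two pieces meet along $S^1\times\Sigma_2$. The first step is to identify each piece's fundamental group. Because the Lefschetz fibration has a section, the normal meridian of the regular fiber, which is the $S^1$-factor of the boundary, bounds a disk in the fiber complement and so is trivial there. As recalled above, $\pi_1(M\setminus\nu(\Sigma_2))\cong \pi_1(\Sigma_2)/N\cong \pi_1(M)\cong\Z^2$. This group is freely generated by $a_2,b_1$, subject to the relations $a_1=-2a_2$ and $b_2=-b_1$ coming from the vanishing cycles. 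I will write the same generators as $a_2,b_1$ on the first copy and as $A_2,B_1$ on the second copy.

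Next comes the effect of the gluing. The antipodal map on the $S^1$-factor sends the meridian to a meridian, and that meridian is trivial on both sides, so it contributes nothing. Consequently $\pi_1(X')$ is the quotient of $\Z^2 * \Z^2$ obtained by identifying the image of each $x\in\pi_1(\Sigma_2)$ in the first copy with the image of $(t_{b_1}\circ\phi)_*(x)$ in the second copy.

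Both pieces have abelian fundamental group, so the image of any loop of $\Sigma_2$ in either piece depends only on its homology class. This means the identifications can be computed with the homology action. The map $\phi$ acts by $a_i\mapsto a_i$ and $b_i\mapsto -b_i$. The Dehn twist $t_{b_1}$ fixes $b_1$, $a_2$ and $b_2$, and sends $a_1\mapsto a_1+\epsilon b_1$ with $\epsilon=\pm1$, since $a_1\cdot b_1=\pm1$. Composing, $t_{b_1}\circ\phi$ acts on homology by
\[
a_1\mapsto a_1+\epsilon b_1,\qquad b_1\mapsto -b_1,\qquad a_2\mapsto a_2,\qquad b_2\mapsto -b_2 .
\]
The resulting relations are as follows:
\begin{itemize}
\item $a_2=A_2$;
\item $b_1=B_1^{-1}$, and the relation from $b_2$ is the same one;
\item from $a_1$, we get $a_2^{-2}=A_2^{-2}B_1^{\epsilon}$, which together with $a_2=A_2$ forces $B_1=1$, and hence $b_1=1$.
\end{itemize}
Therefore $\pi_1(X')$ is generated by the single element $a_2=A_2$, so it is cyclic.

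It remains to see that this cyclic group is infinite. I will compute $H_1(X')$ using the abelian Mayer--Vietoris sequence, with exactly the same relations as above. The relations coming from the boundary never involve a multiple of $a_2$ alone, so $H_1(X')\cong\Z$ generated by $a_2$. A cyclic group whose abelianization is $\Z$ is $\Z$, which gives $\pi_1(X')\cong\Z$.

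The step I expect to need the most care is justifying that the meridian and basepoint issues do not introduce extra generators or relations. One point is that $\Phi$ is the identity on homology of the $S^1$-factor only up to the antipodal rotation. The other is that regarding $b_1$ as an unbased loop, and $\phi$ as moving the basepoint, only changes the identifications by conjugation. Such conjugation is harmless because both sides are abelian. The sign $\epsilon$ of the twist is irrelevant to the conclusion.
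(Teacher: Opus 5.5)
Your proposal is correct and follows the paper's proof. Both use Seifert--Van Kampen with abelian fiber complements and trivial meridians, and both use the homological action of $t_{b_1}\circ\phi$ to force $B_1=b_1=0$ and $a_2=A_2$. Your abelianization check that the resulting cyclic group is infinite, together with your remark on basepoints, makes explicit a step the paper asserts without comment when it says the group is infinite cyclic.
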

\begin{proof}
  We apply the Seifert-Van Kampen theorem.  By the discussion above,
  the fundamental groups of the fiber complements are abelian, and the
  normal meridians of the fibers are trivial.  It is therefore enough to
  work with the induced homology relations.

  Let $a_i, b_i$ denote the generators on the first side, and
  $A_i, B_i$ the corresponding generators on the second summand.
  On each side, the vanishing cycles yield the relations
  $$
        b_1+b_2=0,\qquad a_1+2a_2=0
  $$
  and
  $$
        B_1+B_2=0,\qquad A_1+2A_2=0
  $$
  in the fundamental group. Under the boundary identification, and with our convention for the right-handed
  Dehn twist $t_{b_1}$, the Picard--Lefschetz formula gives
  $$
        a_1\mapsto A_1+B_1,\qquad a_2\mapsto A_2,\qquad
        b_1\mapsto -B_1,\qquad b_2\mapsto -B_2 .
  $$
  Therefore the first-summand relation $a_1+2a_2=0$ maps to
  $$
        A_1+2A_2+B_1=0.
  $$
  Comparing this with the second-summand relation $A_1+2A_2=0$, we get
  $B_1=0$, and hence also $B_2=0$.  The gluing then gives
  $b_1=b_2=0$.  The remaining relations identify $a_1=-2a_2$ and
  $A_1=-2A_2$, while $a_2$ is identified with $A_2$.  Thus the
  fundamental group is infinite cyclic, generated by $a_2$.
\end{proof}

The Euler characteristic and signature of the four-manifold $X'=M\# _{\Phi}M$ are determined by the fiber-sum formula.
\begin{lem}
  The four-manifold $X'$ has $\chi (X')=10$,
  $\sigma (X')=-6$, and hence $b_2^+(X')=2$ and $b_2^-(X')=8$. \qed
  \end{lem}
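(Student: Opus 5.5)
We compute $\chi(X')$ and $\sigma(X')$ from the invariants of $M$, and then use $b_1(X')$ from Proposition~\ref{prop:xprime} to read off $b_2^\pm$.

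\textbf{Euler characteristic.} Since $\chi(S^1\times\Sigma_2)=0$, the Euler characteristic is additive under the gluing. Removing a fibered neighborhood $\Sigma_2\times D^2$, which has $\chi=-2$, from $M$ gives a piece with
\[
\chi=\chi(M)+2=5 .
\]
Here I use $\chi(M)=\chi(T^2\times S^2\#3\cpkk)=0+3=3$; note the paper states $\chi(M)=3$. Hence
\[
\chi(X')=5+5=10 .
\]
Equivalently, one can use the Lefschetz fibration formula: $X'$ is a genus-2 Lefschetz fibration over $S^2$ with $14$ singular fibers, so
\[
\chi(X')=\chi(S^2)\chi(\Sigma_2)+14=-4+14=10 .
\]

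\textbf{Signature.} Novikov additivity applies along the closed $3$-manifold $S^1\times\Sigma_2$. The fiber neighborhood $\Sigma_2\times D^2$ has zero intersection form, so each fiber complement has signature $\sigma(M)=-3$. Therefore
\[
\sigma(X')=-3+(-3)=-6 .
\]
One could alternatively sum the local signatures of the vanishing cycles, but Novikov additivity is cleaner. The twisted gluing $\Phi$ does not affect either computation, since both invariants are insensitive to the gluing map.

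\textbf{Betti numbers.} By Proposition~\ref{prop:xprime}, $\pi_1(X')\cong\Z$, so $b_1(X')=1$, and by Poincar\'e duality $b_3(X')=1$. Then
\[
\chi(X')=2-2b_1+b_2,
\]
so $10=2-2+b_2$ and $b_2(X')=10$. Solving $b_2^++b_2^-=10$ and $b_2^+-b_2^-=-6$ gives $b_2^+(X')=2$ and $b_2^-(X')=8$.

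The only point needing care is the use of $b_1=1$. If one had assumed $X'$ were simply connected, the count would give $b_2=8$ and the wrong values of $b_2^\pm$. There is no genuine obstacle.
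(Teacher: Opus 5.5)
Your proof is correct and follows the same route as the paper: additivity of $\chi$ and Novikov additivity of $\sigma$ across the neck $S^1\times\Sigma_2$ (the fiber-sum formula the paper invokes), then $b_1(X')=1$ from Proposition~\ref{prop:xprime} to obtain $b_2=10$, hence $b_2^+=2$ and $b_2^-=8$. Your second computation of $\chi(X')=10$, counting the $14$ singular fibers of the glued genus-$2$ Lefschetz fibration, is a valid independent check.
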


Notice that $\Phi$ is a fixed point free involution of the gluing
three-manifold. As $\phi$ is orientation-reversing and sends $b_1$
setwise to itself, we have
$$
        \phi \, t_{b_1} \, \phi^{-1}=t_{b_1}^{-1}.
$$
It follows that $t_{b_1}\circ\phi$ is an involution.  Since the antipodal
map $a$ on the $S^1$-factor is fixed point free,
$\Phi=a\times(t_{b_1}\circ\phi)$ is fixed point free.  Consequently,
$X'$ admits a fixed point free involution $\iota _{X'}$ by flipping the two sides of the twisted fiber sum.

Consider the torus $T\subset \partial(M\setminus \nu F)\subset X'$
obtained by taking the product of $a_2\subset F\cong \Sigma_2$ with the
meridian circle of the removed fiber.

\begin{lem}\label{lem:dual-sphere}
  The torus $T$ admits an embedded dual sphere $S\subset X'$ with
  $S^2=-2$. In particular, $T$ is homologically essential, and the normal
  meridian of $T$ is trivial in $\pi_1(X'\setminus \nu T)$.
\end{lem}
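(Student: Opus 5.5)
We construct the dual sphere from a Lefschetz vanishing cycle. The torus $T$ is $a_2\times\mu_F$, where $\mu_F$ is the meridian of the removed fiber. We look for a disk in $M\setminus\nu F$ bounded by a curve on $F$ that meets $a_2$ once. In a genus-2 Lefschetz fibration, each nonseparating vanishing cycle $c$ bounds a Lefschetz thimble: an embedded disk in $M\setminus\nu F$ with boundary $c\subset\partial\nu F$, pushed to some $\{\theta\}\times F$, and relative self-intersection $-1$ with respect to the fiber framing.

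\emph{Choice of cycle.} Among $D,E,F,G,\alpha,\sigma,\gamma$ we pick a nonseparating vanishing cycle $c$ whose algebraic intersection with $a_2$ on $\Sigma_2$ is $\pm1$. After an isotopy we take the geometric intersection to be one point. The relations $b_1+b_2=0$ and $a_1+2a_2=0$ read off from the cycles show that some cycle has a nonzero $b_2$-component. If no single cycle has $b_2$-coefficient $\pm1$, we would instead use a matching pair, or a Hurwitz-equivalent factorization. Checking this against Figure~\ref{fig:curves} is the step I expect to need care.

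\emph{Building $S$.} Let $c'=\Phi(c)$ in the second copy of $F$. Since $\Phi=a\times(t_{b_1}\circ\phi)$ and $\phi$ acts on homology by $a_i\mapsto a_i$, $b_i\mapsto -b_i$, we need $c'$ to be isotopic to a vanishing cycle of the second copy of $M$. If it is, the two thimbles glue across the annulus $c\times[\text{neck}]$ in $\partial\nu F$ to give an embedded sphere $S$ with $S\cdot S=-1-1=-2$. Here the framings must be compatible, because the gluing preserves the fiber framing up to the $S^1$-factor. Every cycle has a vanishing-cycle partner, since the second side carries the same factorization conjugated by $t_{b_1}\circ\phi$, whose vanishing cycles are $t_{b_1}\phi(c)$. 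Equivalently, we regard $X'$ as a fiber sum whose second half has monodromy curves $t_{b_1}\phi(\cdot)$, so the pushed curve is automatically a vanishing cycle there.

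\emph{Checking $S\cdot T=\pm1$.} The sphere $S$ meets the boundary $\partial\nu F\cong S^1\times F$ in $\{\theta\}\times c$. The torus $T=\mu_F\times a_2$ sits in the same $S^1\times F$, pushed slightly into one side. Inside the 3-manifold, $\{\theta\}\times c$ and $\mu_F\times a_2$ meet in $|c\cap a_2|=1$ point. Pushing $T$ slightly off the hypersurface makes this a transverse intersection in $X'$ of sign $\pm1$.

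\emph{Consequences.} Since $T\cdot S=\pm1$, $T$ is homologically essential. The normal meridian of $T$ bounds the punctured sphere $S\setminus\nu T$, a disk in $X'\setminus\nu T$, so it is trivial in $\pi_1(X'\setminus \nu T)$.

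The main obstacle is the choice of cycle: locating a vanishing cycle that meets $a_2$ exactly once and whose image under $t_{b_1}\circ\phi$ is again a vanishing cycle of the second side, with matching framings. A fallback is to find $c$ disjoint from $b_1$ and preserved by $\phi$ up to orientation.
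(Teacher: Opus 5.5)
Your overall architecture is the paper's: one thimble on each side, joined by a cylinder in the neck, gives a $-2$ sphere meeting $T=a_2\times\mu_F$ once, and puncturing it kills the meridian. The gap is in the matching step, which carries all the content.

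Write $g=t_{b_1}\circ\phi$. Let $\mathcal{V}$ be the set of curves on the boundary fiber of $M\setminus\nu F$ that bound Lefschetz thimbles, i.e.\ the curves occurring in Hurwitz-equivalent factorizations of \eqref{eq:relation}. A thimble with boundary $c$ on the first side extends to a sphere exactly when $c\in\mathcal{V}$ and $g(c)\in\mathcal{V}$, measured in the second side's own coordinates. Equivalently, in the first side's coordinates, you need $c\in\mathcal{V}\cap g(\mathcal{V})$.

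Your claim that the pushed curve is ``automatically'' a vanishing cycle mixes these two coordinate systems. Describing the second half by the curves $g(d)$, $d\in\mathcal{V}$, is valid only in the first side's coordinates. There the curve you must match is still $c$, not $g(c)$. Nothing makes $\mathcal{V}$ invariant under $g$, and your last paragraph in effect concedes that this is the unresolved obstacle. Your fallback is never shown to exist either: it would need a vanishing cycle in the homology class $\pm(b_1+b_2)$ that is isotopic to its own $\phi$-image, disjoint from $b_1$, and meets $a_2$ once.

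The paper settles this step concretely. Take $c=D$, which meets $a_2$ in one point. From Figure~\ref{fig:curves}, $\phi(D)=D':=t_\sigma^{-1}(D)$, and $D'$ is disjoint from $b_1$, so $g(D)=D'$. The Hurwitz move $t_Dt_\sigma=t_\sigma t_{D'}$ then shows that $D'$ is a vanishing cycle of the second copy.

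A smaller point concerns your first step. A curve with algebraic intersection $\pm1$ with $a_2$ cannot in general be isotoped to meet $a_2$ in a single point. Algebraic intersection suffices for $S\cdot T=\pm1$. For the claim about $\pi_1(X'\setminus\nu T)$, however, $S\cap T$ must be a single geometric point, so that $S\setminus\nu T$ is a disk. The paper reads this off the figure for $D$.
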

\begin{proof}
  Let $D'=t_{\sigma}^{-1}(D)$. Applying a Hurwitz move to the consecutive factors $t_Dt_{\sigma}$ in the monodromy factorization for $M$ given in Equation~\eqref{eq:relation}, we get a new factorization where this subword is replaced by 
  $t_{\sigma}t_{D'}$. Hence $D'$ can be taken as a vanishing cycle in the same Lefschetz fibration.

  The curve $D$ intersects $a_2$ once. Moreover, from Figure~\ref{fig:curves}
  the fiber part $t_{b_1}\circ\phi$ of the gluing map sends $D$ to $D'$:
  the reflection $\phi$ takes $D$ to $D'$, and $D'$ is disjoint from $b_1$.
  Choose a thimble for $D$ in one summand and for $D'$ in the
  other summand.  In the fiber-sum neck, the rotation in the meridian
  direction supplies a cylinder joining the copy of $D$ on one boundary
  fiber to the copy of $D'$ on the boundary fiber to which it is identified.
  The two thimbles together with this cylinder form an embedded sphere
  $S\subset X'$.

  This is the usual sphere associated to the two matching vanishing
  cycles, so its self-intersection is $-2$. After a small perturbation in
  the neck, its intersection with $T=a_2\times \mu_F$ is transverse and
  consists of one point, since $D$ intersects $a_2$ once. Thus $S$ is dual
  to $T$. Removing a small disk from $S$ at this intersection point gives a
  disk in $X'\setminus \nu T$ bounded by the normal meridian of $T$, so this
  meridian is trivial in $\pi_1(X'\setminus \nu T)$.
\end{proof}

The next result will play a key role for our equivariant torus surgery:

\begin{prop}
  The torus $T$ is setwise fixed by the involution $\iota _{X'}$,
  and admits a framing $f$ so that the induced involution on
  $T^3=\partial(\nu T)$ is $a\times a\times a$, where
  $a\colon S^1\to S^1$ is the antipodal map.
\end{prop}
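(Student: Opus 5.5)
The plan is to make the twisted fiber sum completely explicit near the gluing region and read off everything there. A collar of the boundary of $M\setminus\nu F$ is $[0,1]\times S^1\times\Sigma_2$, with $\{1\}\times S^1\times\Sigma_2$ the boundary, the $S^1$-factor the meridian direction $\mu_F$, and $\Sigma_2$ a fiber. I will write $X'$ as the union of two copies $M_1,M_2$ of $M\setminus\nu F$ glued by $\Phi$. Equivalently, I will insert a neck $[-1,1]\times S^1\times\Sigma_2$, with $M_1$ attached at $t=-1$ by the identity and $M_2$ attached at $t=1$ by $\Phi$. The involution $\iota_{X'}$ then swaps $M_1$ and $M_2$. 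On the neck I will take it to be $(t,\theta,x)\mapsto(-t,\Phi(\theta,x))$. The compatibility with the attaching maps uses only $\Phi^2=\mathrm{id}$, which was established above; this is exactly the statement that $\iota_{X'}$ comes from flipping the two sides. I place $T$ in the middle slice $\{0\}\times S^1\times a_2$. It is isotopic to the torus $T$ of the statement, since that torus sits at $t=-1$ and we simply push it along the collar.

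\textbf{Step 1: $T$ is invariant.} On the neck the involution acts by $a$ on the $S^1$-factor and by $t_{b_1}\circ\phi$ on the fiber. So it suffices to choose the representative curve $a_2\subset\Sigma_2$ to be invariant under $t_{b_1}\circ\phi$. The twist $t_{b_1}$ is supported in an annular neighborhood of $b_1$, which is in the first handle, so we may choose $a_2$ disjoint from that support. Then we need $\phi(a_2)=a_2$ setwise. In the standard picture, $a_2$ lies in the horizontal plane of reflection and is preserved by the hyperelliptic rotation; on homology this matches $\phi_*(a_2)=a_2$. The restriction of $\phi$ to $a_2$ must be fixed-point free, since $\Phi$ and $t_{b_1}\circ\phi$ are. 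It also preserves the orientation of $a_2$, because $a_2\mapsto a_2$. Hence it is an orientation-preserving free involution of a circle, so after reparametrizing $a_2$ it is the antipodal map. Therefore $\iota_{X'}$ restricts to $T=S^1\times a_2$ as $a\times a$ (at $t=0$).

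\textbf{Step 2: framing and the normal action.} The normal bundle of $T$ in $X'$ is spanned by the $t$-direction and the direction normal to $a_2$ inside $\Sigma_2$. I choose a thin annulus $N(a_2)\cong a_2\times[-\epsilon,\epsilon]$ that is invariant under $\phi$ and disjoint from $b_1$. This gives the product framing
\[
\nu T\cong S^1\times a_2\times[-\epsilon,\epsilon]_t\times[-\epsilon,\epsilon]_s .
\]
In these coordinates the involution acts by $t\mapsto -t$, and on $s$ by $\pm s$ depending on whether $\phi$ swaps the sides of $a_2$. Since $\phi$ is orientation reversing on $\Sigma_2$ and preserves the orientation of $a_2$, it must swap the two sides, so $s\mapsto -s$. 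The normal action is therefore $(t,s)\mapsto(-t,-s)$, rotation by $\pi$. On the meridian circle of $\partial\nu T$ this is the antipodal map. Consequently, on $T^3=\partial\nu T=S^1\times a_2\times S^1_{\mu}$ the induced involution is $a\times a\times a$.

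\textbf{Main obstacle.} The delicate point is Step 1: checking from Figure~\ref{fig:curves} that $a_2$ really can be realized disjoint from $b_1$ and setwise invariant under the specific $\phi$ (reflection composed with hyperelliptic rotation), with $\phi|_{a_2}$ free. If $a_2$ is instead only isotopic to its image, I would first try to replace $a_2$ by an invariant representative in its free homotopy class. This uses that the quotient of $\Sigma_2\setminus\nu(b_1)$ by the free involution is a surface on which $a_2$ descends to a two-sided curve, which can then be lifted back. A secondary issue is that the chosen framing $f$ may differ from any framing fixed elsewhere, such as the Lagrangian or fiber framing. This does not affect the claim, since the statement asks only for the existence of some framing with the required property.
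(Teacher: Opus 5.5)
\textbf{The gap.} Your set-up matches the paper's: the explicit neck, $T$ in the middle slice, the product framing, and the observation that both normal directions are reversed. Your Step~2 is correct. The problem is in Step~1, where you claim that $\phi|_{a_2}$ is fixed point free ``since $\Phi$ and $t_{b_1}\circ\phi$ are.''

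The paper only shows that $t_{b_1}\circ\phi$ is an involution. The freeness of $\Phi$ comes entirely from the antipodal factor on $S^1$.

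In fact $\phi$ is a reflection. Reflecting in a plane through the hyperelliptic axis and then rotating by $\pi$ about that axis gives reflection in the perpendicular plane through the axis; the paper calls $\phi$ ``the reflection'' in the proof of Lemma~\ref{lem:dual-sphere}. Its fixed set separates $\Sigma_2$ into two halves interchanged by $\phi$. So every connected $\phi$-invariant curve meets the fixed set, and an orientation-preserving involution of a circle with a fixed point is the identity.

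Since $\phi_*[a_2]=[a_2]\neq 0$, your invariant representative of $a_2$ (disjoint from the support of $t_{b_1}$) is fixed pointwise. Hence $\iota_{X'}|_T=a\times\mathrm{id}$, not $a\times a$. Your Step~2 then actually produces translation by $(\tfrac12,0,\tfrac12)$ on $\partial\nu T$ in the coordinates $(\mu_F,a_2,m_T)$. The paper's proof asserts the antipodal action on both circle factors of $T$ without further argument, so it does not address this point either.

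\textbf{The repair.} The statement is still true, but it needs a reparametrization of $T$, not just a choice of framing. Keep the product framing and the curve $a_2$, and use as the other circle factor of $T$ a curve in the class $\mu_F+a_2$. A linear automorphism of $T^3$ conjugates translation by a vector $w$ to translation by its image, and
\[
\tfrac12\mu_F+\tfrac12 m_T\equiv \tfrac12 a_2+\tfrac12 m_T+\tfrac12(\mu_F+a_2) \pmod{H_1(T^3;\Z)}.
\]
So in the basis $(a_2,m_T,\mu_F+a_2)$ the involution is $a\times a\times a$.

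A change of framing alone only adds multiples of $m_T$ to the push-offs of $\mu_F$ and $a_2$. It therefore leaves the coefficient on the push-off of $a_2$ equal to $0$, so it cannot give $a\times a\times a$.

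Even in the original coordinates, the $\mu_F$-component of the translation is $\tfrac12$ and the filling slope $a_2+k\,m_T$ has no $\mu_F$-component. So the extension in Proposition~\ref{prop:extends} remains free.
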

\begin{proof}
  The curve $a_2$ is setwise fixed by $t_{b_1}\circ\phi$, since
  $\phi$ fixes $a_2$ setwise and $a_2$ is disjoint from $b_1$.
  The meridian circle of the fiber is acted on by the antipodal map in
  the gluing region.  Hence $T$ is setwise fixed by $\iota _{X'}$.

  Choose the framing $f$ from the product coordinates in the fiber-sum
  neck.  In these coordinates the involution acts antipodally on the two
  circle factors of $T$.  The normal directions to $T$ are given by the
  normal direction to $a_2$ in $\Sigma_2$ and the neck direction; both are
  reversed by the side-switching involution.  Thus the induced involution
  on $\partial(\nu T)$ is $a\times a\times a$.
\end{proof}

\begin{defn}\label{def:TorusSurgery}
Define $X(k)$ as the result of torus surgery on $X'$ along the torus
$T$ with framing $f$, distinguished curve $a_2$, and coefficient
$k\in \Z^+$.  Equivalently, if $m_T$ denotes the normal meridian of $T$,
then the meridian of the attached $D^2\times T^2$ is glued to the curve
$a_2+k m_T$ on $\partial(\nu T)$.
\end{defn}

\begin{prop}\label{prop:extends}
  The free, orientation-preserving involution $\iota _{X'}$ extends from
  the complement of $\nu(T)$ to $X(k)$ as a free, orientation-preserving
  involution $\tau(k)$.
\end{prop}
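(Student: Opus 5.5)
We will extend $\iota_{X'}$ over the glued-in solid piece $D^2\times T^2$ by a free involution that matches it on the boundary. By the previous proposition, in the framing $f$ the involution on $\partial(\nu T)=T^3$ is $a\times a\times a$. We use coordinates $(\theta_1,\theta_2,\theta_3)\in (\R/\Z)^3$, with $\theta_1$ the coordinate of $a_2$, $\theta_3$ that of $m_T$, and $\theta_2$ the remaining direction of $T$. In these coordinates the involution is the translation
\[
v\mapsto v+(\tfrac12,\tfrac12,\tfrac12).
\]
Torus surgery glues in $D^2\times T^2$ so that $\partial D^2$ goes to the primitive class $a_2+k\,m_T$.

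First, choose an integral basis $\lambda=a_2+k\,m_T$, $e_2=[\theta_2]$, $e_3$ of $H_1(T^3)$, where $e_3$ is a class such as $m_T$ that completes the basis. Write the gluing diffeomorphism $\partial(D^2\times T^2)\to \partial(\nu T)$ as a linear map of $(\R/\Z)^3$ taking the $\partial D^2$ direction to $\lambda$. Then pull back the translation vector $w=(\tfrac12,\tfrac12,\tfrac12)$ to these new coordinates; call it $w'=(w'_1,w'_2,w'_3)\in(\tfrac12\Z/\Z)^3$. Here $w'_1$ is the $\partial D^2$ coordinate and $w'_2,w'_3$ are the $T^2$ coordinates.

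Second, the key step: show that the involution is not the identity on the $T^2$ factor, i.e.\ $(w'_2,w'_3)\neq (0,0)$. Inverting the change of basis, a translation $w$ has zero $T^2$-components exactly when it is a multiple of $\lambda$ modulo $\Z^3$. The class $\lambda=(1,0,k)$ has second coordinate $0$, so $\tfrac12\lambda=(\tfrac12,0,\tfrac k2)$ also has second coordinate $0$, while $w$ has second coordinate $\tfrac12$. Hence $w$ is not a multiple of $\lambda$ modulo $\Z^3$, and its image in the quotient $T^2=T^3/\langle\lambda\rangle$ is a nonzero element of order two. This holds for every $k$. It is exactly where the antipodal action in the $\theta_2$-direction, the direction transverse to both $a_2$ and $m_T$, is used. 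I expect this to be the only substantive point.

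Third, define $\tau(k)$ on $D^2\times T^2$ by
\[
(z,u)\mapsto \bigl(\rho(z),\,u+(w'_2,w'_3)\bigr),
\]
where $\rho$ is rotation by $\pi$ if $w'_1=\tfrac12$ and the identity if $w'_1=0$. Both choices are smooth involutions of $D^2$ that extend the boundary action. The map is free because the $T^2$ translation is nontrivial, and it preserves orientation because both factors do. It agrees with $\iota_{X'}$ on the boundary by construction.

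Finally, $\iota_{X'}$ on $X'\setminus\nu T$ is already a free, orientation-preserving involution preserving $\partial\nu T$. Gluing it to this extension gives a continuous involution $\tau(k)$ on $X(k)$. It is smooth after choosing the collar of $\partial\nu T$ equivariantly, which is possible because the action near $T$ is linear in the product coordinates of the fiber-sum neck.
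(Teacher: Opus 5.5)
Your proposal is correct and is essentially the paper's argument. In the framing $f$ the boundary involution is translation by $(\tfrac12,\tfrac12,\tfrac12)$; after an integral change of basis adapted to the slope $a_2+k\,m_T$, it extends over $D^2\times T^2$ as a disk rotation times a translation of $T^2$, and freeness comes from the $\tfrac12$-shift in the circle direction of $T$ transverse to $a_2$ and $m_T$.

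Your basis-independent phrasing of the freeness step (the image of $w$ in $T^3/\langle\lambda\rangle$ is nonzero) is slightly more careful than the paper's explicit formula. In the paper's basis $u=x+ky$, $v=y$, $z$, the $\partial D^2$-shift should be $\tfrac12$, as you find, not the stated $\tfrac{1+k}{2}$, which differs for odd $k$. This slip does not affect the conclusion.
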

\begin{proof}
  We use the framing $f$ from the previous proposition to identify
  $\partial(\nu T)$ with
  $T^3=S^1_x\times S^1_y\times S^1_z$, so that the induced involution is
  $a\times a\times a$.  Here $x$ is the distinguished curve $a_2$, $y$
  is the normal meridian of $T$, and $z$ is the remaining circle factor.
  In the torus surgery defining $X(k)$, we attach $D^2\times T^2$ so that
  the primitive curve $x+ky$ bounds the meridian disk.

  The boundary involution $a\times a\times a$ is translation by
  $(1/2,1/2,1/2)$ on $\R^3/\Z^3$, and hence induces the identity on
  $H_1(T^3;\Z)$.  In the basis
$$
        u=x+ky,\qquad v=y,\qquad z=z,
$$
this translation is given by
$$
        (u,v,z)\mapsto
        \bigl(u+(1+k)/2,\; v+1/2,\; z+1/2\bigr).
$$
Therefore it preserves the filling slope $u=x+ky$ and extends over
$D^2\times T^2$ by rotating the meridian disk through angle
$\pi(1+k)$ and translating the two remaining circle factors as above.
This extension is orientation-preserving and fixed point free, since the
$z$-coordinate is translated by $1/2$.

  Gluing this local extension to the restriction of $\iota _{X'}$ on
  $X'\setminus \nu T$ gives the required free, orientation-preserving
  involution $\tau(k)$ on $X(k)$.
\end{proof}

For $k=1$ the torus surgery can be seen to be a Luttinger surgery along the Lagrangian torus $T$, and thus, $X(1)$ is symplectic---and likely diffeomorphic to the example we constructed in Section~\ref{sec:elsoeset}.

The characteristic numbers of $X(k)$ are unchanged by the torus surgery.

\begin{prop}
  The four-manifolds $X(k)$ are simply connected, and satisfy
  $\chi(X(k))=10$ and $\sigma(X(k))=-6$ for all $k\in \Z^+$.
\end{prop}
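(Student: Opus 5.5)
The characteristic numbers follow at once from the fact that torus surgery removes $T^2\times D^2$ and reglues it along a diffeomorphism of $T^3$. The Euler characteristic is unchanged because $\chi(T^2\times D^2)=0=\chi(T^3)$ and $\chi$ is additive. For the signature, Novikov additivity gives $\sigma(X(k))=\sigma(X'\setminus\nu T)+\sigma(T^2\times D^2)=\sigma(X'\setminus \nu T)=\sigma(X')$, since $\sigma(T^2\times D^2)=0$. The preceding lemma gives $\chi(X')=10$ and $\sigma(X')=-6$, so $\chi(X(k))=10$ and $\sigma(X(k))=-6$ for every $k$.

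For simple connectivity I will use Seifert--Van Kampen for the decomposition $X(k)=(X'\setminus\nu T)\cup (D^2\times T^2)$. The first step is to compute $\pi_1(X'\setminus\nu T)$. By Lemma~\ref{lem:dual-sphere}, the normal meridian $m_T$ of $T$ bounds a disk in $X'\setminus\nu T$, obtained by puncturing the dual sphere $S$. Removing $\nu T$ from $X'$ only adds the meridian as a possible new generator, so $\pi_1(X'\setminus \nu T)\to\pi_1(X')$ is surjective with kernel normally generated by $m_T$. Since $m_T$ is already trivial, the inclusion induces an isomorphism $\pi_1(X'\setminus\nu T)\cong\pi_1(X')\cong\Z$. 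By Proposition~\ref{prop:xprime} this group is generated by $a_2$.

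Gluing in $D^2\times T^2$ kills the curve $a_2+k\,m_T$ on $\partial\nu T$, which is the meridian of the attached solid part, and adds no new generators because $\pi_1(T^3)\to\pi_1(D^2\times T^2)$ is surjective. Therefore
\[
\pi_1(X(k))\cong \pi_1(X'\setminus\nu T)/\langle\langle a_2\, m_T^{k}\rangle\rangle .
\]
Since $m_T=1$, this is $\Z/\langle a_2\rangle=1$, independently of $k$.

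The main point that needs care is that the curve $a_2$ on $\partial\nu T$, pushed off $T$ using the framing $f$, actually represents the generator of $\pi_1(X')\cong\Z$, and not some other element. I would argue as follows. The torus $T=a_2\times\mu_F$ lies in $\partial(M\setminus\nu F)$, and its pushoff in the $a_2$-direction is a parallel copy of $a_2$ in a nearby regular fiber, up to multiplication by powers of $m_T$ and of $\mu_F$. Both $m_T$ and $\mu_F$ are trivial in $\pi_1$: the first by Lemma~\ref{lem:dual-sphere}, the second because the fibration has a section. Since the fiber complements have abelian fundamental group, the image of this pushoff is exactly the class $a_2$. The computation in Proposition~\ref{prop:xprime} shows that this class generates $\pi_1(X')$. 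Base-point issues are harmless because the group is abelian.
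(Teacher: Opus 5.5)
Your proposal is correct and follows the same route as the paper. The dual sphere makes $m_T$ trivial, so the surgery relation $a_2 m_T^k=1$ kills the generator $a_2$ of $\pi_1(X')\cong\Z$, and $\chi$ and $\sigma$ are unchanged by torus surgery. You also supply details the paper leaves implicit: the Van Kampen identification $\pi_1(X'\setminus\nu T)\cong\pi_1(X')$, Novikov additivity for the signature, and the check that the framed pushoff of $a_2$ represents the generator.
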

\begin{proof}
  By Proposition~\ref{prop:xprime}, the fundamental group of $X'$ is
  generated by $a_2$.  The dual sphere to $T$ shows that the normal
  meridian $m_T$ is trivial in the complement of $T$.  With the surgery
  convention of Definition~\ref{def:TorusSurgery}, the surgery adds the
  relation $a_2+k m_T=0$, and hence kills $a_2$.  Therefore $X(k)$ is
  simply connected.  Since torus surgery does not change the Euler
  characteristic or the signature, the claim follows.  In particular,
  $b_2^+(X(k))=1$ and $b_2^-(X(k))=7$.
\end{proof}

By Freedman's classification theorem \cite{Fr}, we get:
\begin{cor}\label{cor:TheTopology}
  For each $k \in \Z^+$, $X(k)$ is homeomorphic to $\cpk \#7\cpkk$.
\end{cor}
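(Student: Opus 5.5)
The statement follows from Freedman's classification, so the plan is to check its hypotheses and identify the intersection form. By the preceding proposition, $X(k)$ is a closed, smooth, simply connected four-manifold with $\chi(X(k))=10$ and $\sigma(X(k))=-6$. Since $\pi_1=1$, we have $b_1=b_3=0$, so $b_2=\chi-2=8$. Combined with $\sigma=-6$, this gives $b_2^+=1$ and $b_2^-=7$.

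Next I would determine the parity of the intersection form $Q_{X(k)}$. Freedman's theorem says that for smooth (hence vanishing Kirby--Siebenmann invariant) simply connected closed four-manifolds, the homeomorphism type is determined by $Q$. The form is unimodular and indefinite, so by the Hasse--Minkowski/Serre classification it is either $\langle 1\rangle\oplus 7\langle -1\rangle$ (odd) or an even form of signature $-6$. An even unimodular form has signature divisible by $8$, and $-6$ is not. Hence $Q_{X(k)}$ is odd, and therefore isomorphic to $\langle 1\rangle\oplus 7\langle-1\rangle$, the intersection form of $\cpk\#7\cpkk$. (Alternatively, Rokhlin's theorem rules out a spin structure, which gives the same conclusion.) Freedman's theorem \cite{Fr} then yields a homeomorphism $X(k)\cong\cpk\#7\cpkk$.

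There is essentially no obstacle here: the only inputs are simple connectivity and the characteristic numbers, both already established. The one point to state explicitly is that the parity of the form follows from the signature alone, so no direct exhibition of an odd-square class is needed.
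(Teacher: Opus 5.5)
Your proposal is correct and follows the paper's route: the paper likewise obtains the corollary by applying Freedman's theorem \cite{Fr} to the preceding proposition (simple connectivity, $\chi=10$, $\sigma=-6$). You have just written out the routine steps the paper leaves implicit: $b_2^+=1$ and $b_2^-=7$; the form is odd because $-6$ is not divisible by $8$; indefinite odd unimodular forms are diagonal; and the Kirby--Siebenmann invariant vanishes by smoothness.
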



\section{Seiberg-Witten invariant computations}
\label{sec:SWcomputation}

In this section we complete the proof of Theorem~\ref{thm:main}.  The
topology and the free involutions were constructed in the previous section;
it remains to determine the Seiberg-Witten invariants which then
distinguish the smooth structures and also verify irreducibility.

We first identify the basic classes of $X'$, the four-manifold with
$\pi_1(X')\cong \Z$ constructed before the torus surgeries.

\begin{prop}\label{prop:basicXprime}
  The symplectic four-manifold $X'$ has exactly two basic classes.
\end{prop}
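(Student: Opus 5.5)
The plan is to combine Taubes' theorem, which produces the pair $\pm K$ for a symplectic structure, with adjunction inequalities and the dimension constraint to rule out every other class. The argument runs parallel to the one used in Section~\ref{sec:elsoeset} for $X$. Since $b_2^+(X')=2>1$, the Seiberg--Witten invariants of $X'$ are independent of the metric. $X'$ is symplectic: it is a twisted fiber sum of genus-2 Lefschetz fibrations, so it carries a Lefschetz fibration of genus $2$ over $S^2$ with fiber $\Sigma$, and Gompf--Thurston gives a symplectic form. By Taubes, the canonical class $K$ (and hence $-K$) is a basic class, so there are at least two.

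For the upper bound, let $L$ be a basic class. Since $X'$ is symplectic with $b_2^+>1$, every basic class has formal dimension zero, so
\[
L^2 = 2\chi+3\sigma = K^2 = 2.
\]
Next we need a good collection of surfaces. The fiber $\Sigma$ has genus $2$, square $0$, and $K\cdot\Sigma=2$. Further genus-2 square-zero surfaces come, as in Section~\ref{sec:elsoeset}, from matching the classes $f$ and $s$ of $T^2\times S^2$ across the two copies of $M\setminus\nu F$. The exceptional $(-1)$-spheres supply sections of each summand; we glue them pairwise into square $-2$ or square $-1$ classes after tubing. Finally, the $(-2)$-spheres from matching vanishing cycles, as in Lemma~\ref{lem:dual-sphere}, give additional classes.

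A cleaner alternative is the Morgan--Szab\'o--Taubes product formula for fiber sums along $\Sigma_2\times S^1$. Adjunction with the fiber gives $|L\cdot\Sigma|\le 2$. The relevant Floer homology vanishes unless $L\cdot\Sigma=\pm 2$, and in the top spin$^c$ structures it is one-dimensional. The invariant of $X'$ is then a sum over classes restricting to a given class on each side. Each side $M\setminus\nu F$ is the complement of a fiber in $T^2\times S^2\#3\cpkk$ with relative invariant concentrated, after the wall-crossing computation in Section~\ref{sec:elsoeset}, on the single class restricting to $c_1-E_1-E_2-E_3$. We then need to control the ambiguity from the rim tori and from the gluing, that is, the kernel of $H^2(X')\to H^2(M_1\setminus F)\oplus H^2(M_2\setminus F)$. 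Here the fact that $b_1$ is killed by the gluing (Proposition~\ref{prop:xprime}) should show that the only freedom is the multiple of $\PD(\Sigma)$ and the classes coming from the $S^1$-direction. Adjunction with $\Sigma$ and with the rim/dual surfaces pins these down, yielding only $\pm K$.

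I would actually write the proof as a combination. First, pass to the lattice: write $L$ in a basis of $H_2(X';\Z)$ built from $\Sigma$, the glued class $B$, the six $(-1)$-classes $x_i$, and the extra hyperbolic pair arising from $b_2^+=2$ (the torus $T$ and its dual sphere $S$). Then impose that $L$ is characteristic, that $L^2=2$, and that $|L\cdot A|\le 2g(A)-2$ for the genus-2 square-zero classes and $|L\cdot T|=0$ for the essential square-zero torus $T$ (adjunction for tori when $b_2^+>1$). Next use $L\cdot S$ via the $(-2)$-sphere: since $S$ is dual to $T$, this determines the coefficient of $\PD(T)$. With this data the same arithmetic as in Section~\ref{sec:elsoeset} should force $|t_i|=1$ and then all signs equal. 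The main obstacle is exhibiting enough explicitly represented surfaces with controlled genus in the twisted sum, especially the analogue of the class $C$ whose pairing forces all $t_i$ to agree. I would first try to construct it from two section spheres of $M$ glued along the neck and tubed to the fiber, checking its genus via the Lefschetz fibration picture. If that fails, I would fall back on the fiber-sum product formula to fix the restriction to each side.
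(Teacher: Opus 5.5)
Your framework is the right one: Taubes for $\pm K$ and simple type, hence $L^2=2\chi+3\sigma=2$, then lattice arithmetic with adjunction, including $L\cdot T=0$. But the two steps that actually cut the candidates down to $\pm K$ are missing.

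\textbf{The $T$-coefficient.} Write $L=rT+sS+\cdots$. Then $L\cdot T=s$ and $L\cdot S=r-2s$, so $L\cdot T=0$ only gives $s=0$, and you still need $L\cdot S=0$. An essential $(-2)$-sphere imposes no such constraint by itself: in $K3\#2\cpkk$ the sphere $e_1-e_2$ pairs to $\pm2$ with two of the basic classes $\pm e_1\pm e_2$. The paper's fix is to smooth the single point of $T\cap S$. This gives an embedded torus representing $T+S$ of square $0+2-2=0$, so adjunction gives $L\cdot(T+S)=0$, hence $L\cdot S=0$ and $r=0$.

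\textbf{The signs of the $t_i$.} This is the more serious gap. You concede you have no analogue of $C$. Your square $-1$ classes $x_i$ (exceptional spheres or sections ``glued pairwise after tubing'') have no representatives of controlled genus, so no adjunction inequality applies to them. The missing idea is to use the six reducible singular fibers of $X'$ (three in each copy of $M$, from the separating vanishing cycles). Each is a union of two genus-one surfaces of square $-1$, and both components can be used.

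Concretely, let $Q$ be the $(-2)$-sphere glued from the two $(-1)$-sections. Let $R$ be the genus-$2$ square-zero surface obtained by smoothing $F\cup Q$; this replaces your ``glued $f$, $s$'' class, whose existence and genus you have not established in the twisted sum. Let $T_i$ be the component of the $i$-th reducible fiber missed by $Q$, so the other component represents $F-T_i$. Then $T,S,F,R,T_1,\dots,T_6$ is a unimodular basis with form $\bigl(\begin{smallmatrix}0&1\\1&-2\end{smallmatrix}\bigr)\oplus\bigl(\begin{smallmatrix}0&1\\1&0\end{smallmatrix}\bigr)\oplus(-I_6)$.

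Write $L=aF+bR+\sum t_iT_i$ (with $r=s=0$ from the step above).
\begin{itemize}
\item The characteristic condition and adjunction on $F,R$ give $a,b$ even with $|a|,|b|\le 2$.
\item Adjunction for the square $-1$ tori $T_i$ (valid since $X'$ has simple type and $b_2^+>1$) gives $t_i=\pm1$.
\item $L^2=2ab-6=2$ then forces $a=b=\pm2$.
\end{itemize}
The signs are fixed fiber by fiber, with no global class like $C$. If $b=2$, adjunction for the torus $F-T_i$ gives $|L\cdot(F-T_i)|=|2+t_i|\le1$, so $t_i=-1$. This leaves only $\pm(2F+2R-\sum T_i)$.

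\textbf{The product-formula fallback} does not substitute for this, for three reasons.
\begin{itemize}
\item The concentration of the relative invariant of $M\setminus\nu F$ on a single class is borrowed from the wall-crossing for the surface $G$ of Section~\ref{sec:elsoeset}. That surface is never identified with the Lefschetz fiber.
\item The rim-torus and gluing ambiguity is only asserted to be controllable.
\item The gluing does not kill $b_1$; indeed $b_1(X')=1$.
\end{itemize}
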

\begin{proof}
  Since $b_1(X')=1$, $\chi(X')=10$, and $\sigma(X')=-6$, we have
  $b_2^+(X')=2$ and $2\chi(X')+3\sigma(X')=2$.  By Taubes' theorem
  \cite{Taubes}, $X'$ has Seiberg-Witten simple type, and
  $\pm c_1(X')$ are basic classes.

  We use the following basis for $H_2(X';\Z)$.  First, we have the torus
  $T$ and the embedded dual sphere $S$ of Lemma~\ref{lem:dual-sphere};
  their intersection matrix is
  $$
  \begin{pmatrix}
  0 & \ 1\\
  1 & -2
  \end{pmatrix}.
  $$
  Next, let $F$ denote the regular fiber.  Gluing the sections of the two
  copies of $M$ gives a sphere $Q$ with $Q^2=-2$ and $F\cdot Q=1$.
  Let $R$ be the genus-2 surface of square zero obtained by smoothing the positive double point intersection in
  $F\cup Q$.  Then $F^2=R^2=0$ and $F\cdot R=1$.  Finally, from each of
  the six reducible singular fibers of $X'$ choose the genus-one component
  $T_i$ disjoint from $Q$.  Then $T_i^2=-1$, and the other component of
  the corresponding reducible fiber represents $T_i'=F-T_i$.  With these
  choices, all other intersections vanish, and the resulting intersection
  matrix is unimodular.  Hence these ten classes form a basis.

  Let $K$ be a basic class, and write its Poincar\'e dual in this basis as
  $$
  K=rT+sS+aF+bR+\sum_{i=1}^6 t_iT_i.
  $$
  The adjunction inequality applied to the square-zero torus $T$ gives
  $K(T)=0$.  Smoothing the single intersection point of $T$ and $S$ gives
  an embedded torus representing $T+S$ and having square zero.  Applying
  adjunction to this torus gives $K(T+S)=0$.  Hence $K(S)=0$ as well,
  and therefore $r=s=0$.

  Since $K$ is characteristic, the coefficients $a,b$ are even and the
  coefficients $t_i$ are odd.  The adjunction inequality applied to the
  genus-2 square-zero surfaces $F$ and $R$ gives
  $$
  |K(F)|=|b|\leq 2,\qquad |K(R)|=|a|\leq 2.
  $$
  Applying adjunction to the genus-one surfaces $T_i$ of square $-1$ gives
  $|t_i|\leq 1$.  Thus $t_i=\pm 1$ for all $i$.

  Since $X'$ has simple type, every basic class satisfies
  $K^2=2\chi(X')+3\sigma(X')=2$.  Therefore
  $$
  2=K^2=2ab-\sum_{i=1}^6 t_i^2=2ab-6,
  $$
  and hence $ab=4$.  Since $a$ and $b$ are even and $|a|,|b|\leq 2$,
  we have either $a=b=2$ or $a=b=-2$.  Replacing $K$ by $-K$ if necessary,
  assume that $a=b=2$.

  It remains to determine the signs of the $t_i$.  For each $i$, the other
  component $T_i'=F-T_i$ of the corresponding reducible fiber is also a
  genus-one surface of square $-1$.  Adjunction gives
  $
  |K(T_i')|\leq 1.
  $, but
  $$
  K(T_i')=K(F-T_i)=2+t_i,
  $$
  and since $t_i=\pm 1$, this forces $t_i=-1$ for every $i$.  Thus, up to
  sign, there is only one possible choice for the signs:
  $$
  K=2F+2R-\sum_{i=1}^6 T_i.
  $$
  Since $\pm c_1(X')$ are basic classes, these are precisely the two basic
  classes of $X'$.
\end{proof}

\begin{proof}[Proof of Theorem~\ref{thm:main}]
  By Corollary~\ref{cor:TheTopology}, the manifolds $X(k)$ are
  homeomorphic to $\cpk\#7\cpkk$, and by
  Proposition~\ref{prop:extends} they admit free, orientation-preserving
  involutions.  Applying the Morgan--Mrowka--Szab\'o torus-surgery formula
  \cite{MMSz} to the surgery family determined by $T$, and using
  Proposition~\ref{prop:basicXprime}, we get that $X(k)$ has exactly two
  Seiberg-Witten basic classes $\pm K_k$, and
  $$
        SW_{X(k)}(\pm K_k)=\pm k.
  $$
  Since the small perturbation Seiberg-Witten invariant is a
  diffeomorphism invariant for manifolds homeomorphic to
  $\cpk\#7\cpkk$, the manifolds $X(k)$ and $X(k')$ are diffeomorphic only
  if $k=k'$.

  The nontrivial Seiberg-Witten invariants show that these smooth
  structures are exotic.  The uniqueness of the pair of basic classes also
  gives irreducibility: a nontrivial connected-sum decomposition would,
  by Donaldson's diagonalization theorem \cite{Donaldson} and the standard
  connected-sum/blow-up formula for Seiberg-Witten invariants, produce
  more than one pair of basic classes.  Finally, the symplectic member is
  $X(1)$; for $k>1$, Taubes' theorem would force a basic class with
  Seiberg-Witten invariant equal to $\pm 1$, while the only nonzero values
  above are $\pm k$.  Hence no $X(k)$ with $k>1$ admits a symplectic
  structure.
\end{proof}

\section{Final remarks}
\label{sec:final}

We end with two comments.  The first concerns related constructions.  Similar
equivariant normal-sum constructions can be carried out starting from
$T^4=T^2\times T^2$.  Take a fiber and a section, smooth their single
intersection point, and blow up twice.  This gives a genus-2 surface
$\Sigma\subset T^4\#2\cpkk$ with trivial normal bundle.  Using an
appropriate framing and the same boundary gluing as in
Section~\ref{sec:elsoeset}, one obtains a four-manifold
$$
W=(T^4\#2\cpkk \setminus \nu(\Sigma))\cup
(T^4\#2\cpkk \setminus \nu(\Sigma))
$$
with an orientation-preserving free involution.

In this case the resulting manifold is not simply connected.  Applying
appropriate torus surgeries on tori in $T^4$ disjoint from the section
and the fiber used in the construction of $\Sigma$, and doing so
equivariantly, gives simply connected four-manifolds homeomorphic to
$\cpk\#5\cpkk$.  With suitable surgery coefficients, the resulting
infinite family can be distinguished by Seiberg-Witten invariants.  Their
quotients give infinitely many exotic irreducible definite four-manifolds
with $\pi_1=\Z/2\Z$ and $b_2=2$.

Starting with a braided torus and a section, the same construction can be
carried out using only one blow-up: the genus-2 surface $\Sigma$ with
trivial normal bundle lies in $T^4\#\cpkk$, and the final construction
provides an infinite family of exotic irreducible
$\cpk\#3\cpkk$'s with free, orientation-preserving involutions.  Their
quotients are exotic four-manifolds with $\pi_1=\Z/2\Z$ and $b_2=1$;
after reversing orientations, these give infinite collections of fake
projective planes.  The details of these two constructions are given in
\cite{definite2}.  Further related constructions, starting from
$\Sigma_g\times T^2$ and using equivariant torus surgeries, appear in
\cite{finitecyclic}.

The second comment concerns the real-symplectic nature of the symplectic
example.  The symplectic form in the normal-sum construction can be chosen
so that the free involution is anti-symplectic: on the two summands one
takes symplectic forms with opposite signs, and the involution exchanges
the two sides.  Thus the symplectic member of our construction is a real
symplectic four-manifold, in the sense of a symplectic four-manifold
$(X,\omega)$ equipped with an involution $\tau$ satisfying
$\tau^*\omega=-\omega$.  Since $\tau$ is fixed-point free, its real locus is
empty.

This suggests a real-symplectic geography problem: which simply connected
smooth four-manifolds admit real symplectic structures with empty real
locus?  The examples here, together with the constructions in
\cite{definite,definite2,finitecyclic}, show that such structures occur in
many homeomorphism classes of rational surfaces.  It is natural to compare
this with the complex-algebraic situation.  Simply connected minimal
complex surfaces of general type with $p_g=0$ and $K^2=1,2,3,4$ are known
in the homeomorphism classes of
$\cpk\#(8,7,6,5)\cpkk$, respectively, by work of Barlow, Lee--Park, and
Park--Park--Shin \cite{Barlow,LeePark,ParkParkShin3,ParkParkShin4}.
A fixed-point-free real structure on such a surface would give an
anti-holomorphic involution with empty real locus.

There is a simple parity obstruction in some cases.  If
$X$ is homeomorphic to $\cpk\#n\cpkk$ and admits a fixed-point-free
anti-holomorphic involution $c$, then the quotient is a smooth manifold
and $\chi(X)$ must be even. Since $\chi(\cpk\#n\cpkk)=n+3$, this forces
$n$ to be odd. The same parity condition also follows from the signature,
or from the Lefschetz number of $c$. In the
remaining cases $n=7$ and $n=5$ (corresponding to $K^2=2$ and $K^2=4$),
we do not know whether any of the known simply connected complex surfaces
of general type admits a fixed-point-free real structure. For comparison,
the standard del Pezzo representatives in these two homeomorphism classes
do admit fixed-point-free anti-holomorphic involutions; see for example
\cite{LeBrun}.

\end{document}